\newtheorem{theorem}{Theorem}
\newtheorem{assumption}[theorem]{Assumption}
\newtheorem{definition}[theorem]{Definition}
\newtheorem{lemma}[theorem]{Lemma}
\newtheorem{proposition}[theorem]{Proposition}
\newtheorem{remark}[theorem]{Remark}
\newcommand{\bi}{\begin{itemize}}
\newcommand{\ei}{\end{itemize}}
\newcommand{\bd}{\begin{displaymath}}
\newcommand{\ed}{\end{displaymath}}
\newcommand{\be}{\begin{eqnarray*}}
\newcommand{\ee}{\end{eqnarray*}}
\newcommand{\qed}{\nobreak \ifvmode \relax \else
      \ifdim\lastskip<1.5em \hskip-\lastskip
      \hskip1.5em plus0em minus0.5em \fi \nobreak
      \vrule height0.75em width0.5em depth0.25em\fi}
\newcommand{\highlight}[1]{\ifthenelse{\boolean{showcomments}} {\textcolor{red}{#1}}{}}
\newcommand{\edited}[1]{\ifthenelse{\boolean{showcomments}} {\textcolor{blue}{#1}}{}}
\newcommand{\sai}[1]{\ifthenelse{\boolean{showcomments}}
{ \textcolor{red}{(Sai says:  #1)}}{}}
\newcommand{\sinha}[1]{\ifthenelse{\boolean{showcomments}}
{ \textcolor{red}{(Subhrajit says:  #1)}}{}}
\newcommand{\enoch}[1]{\ifthenelse{\boolean{showcomments}}
{ \textcolor{red}{(Enoch says:  #1)}}{}}
\title{Data-Driven Operator Theoretic Methods for Global Phase Space Learning}
\author{Sai Pushpak Nandanoori, Subhrajit Sinha, and Enoch Yeung
% \thanks{The Pacific Northwest National Laboratory (PNNL) is operated by Battelle for the U.S. Department of Energy under Contract DE-AC05-76RL01830.}% <-this % stops a space
\thanks{S. P. Nandanoori and S. Sinha are with Pacific Northwest National Laboratory, Richland, WA 99354 USA and E. Yeung is with University of California Santa Barbara, CA 93106 USA. (emails: saipushpak.n@pnnl.gov, subhrajit.sinha@pnnl.gov,  eyeung@ucsb.edu) This paper is submitted to ACC 2020 and is currently under review. }
}
\begin{document}
\maketitle

% Abstract
% =================================================================================================
\begin{abstract}
In this work, we propose to apply the recently developed Koopman operator techniques to explore the global phase space of a nonlinear system from time-series data. In particular, we address the problem of identifying various invariant subsets of a phase space from the spectral properties of the associated Koopman operator constructed from time-series data. Moreover, in the case when the system evolution is known locally in various invariant subspaces, then a phase space stitching result is proposed that can be applied to identify a global Koopman operator. A biological system, bistable toggle switch and a second-order nonlinear system example are considered to illustrate the proposed results. The construction of this global Koopman operator is very helpful in experimental works as multiple experiments can't be run at the same time starting from several initial conditions. 
\end{abstract}
% =================================================================================================

% Introduction 
% =================================================================================================
\section{Introduction}

Dynamics of nonlinear systems, in general, is rich and exhibit various complex behaviours like multiple equilibria, periodic orbits, limit cycles, chaotic attractors etc., and the study of such systems is very challenging. However, the existing methods in nonlinear systems theory require the knowledge of models describing the evolution of nonlinear system and most of the methods require computing an energy function which varies with respect to every nonlinear system. Furthermore, it is very difficult to study higher dimensional systems using model-based approaches. Recent works indicate that these systems can be effectively studied in a higher dimensional linear space using transfer operator, namely Perron-Frobenius and Koopman operators \cite{lasota2013chaos,mezic2005spectral,rowley2009spectral,vaidya_lyapunov_measure,mezic2013analysis, yeung2017learning}. These operators are adjoint of one another and thus, in principle, a dynamical system can be studied equivalently using either of the operators. However, as far as applicability to real life scenarios is concerned, each of the operators has their own pros and cons. For example, in data-driven analysis, Koopman operator is more suited compared to the Perron-Frobenius operator. Koopman operator was originally introduced by Bernard Koopman in \cite{koopman1931hamiltonian} and this seminal work became a popular tool with \cite{mezic2005spectral} and several other works followed where Koopman operator is used in system identification \cite{rowley2009spectral,schmid2010dynamic}, control design \cite{proctor2016dynamic,brunton2016koopman,huang2018feedback} and finding observability gramians/observers \cite{vaidya2007observability,surana2016linear}.

Koopman operator is an infinite dimensional operator that can represent the evolution of a dynamical system. However, finding an infinite dimensional (linear) operator is computationally intractable and numerous methods have been developed to best represent the (nonlinear) system dynamics with a finite dimensional Koopman operator \cite{rowley2009spectral,tu2013dynamic,williams2014kernel,williams2015data,sinha_robust_acc,sinha_robust_arxiv,sinha_sparse_acc}. Most popular methods include, dynamic mode decomposition (DMD) \cite{rowley2009spectral,tu2013dynamic}; extended dynamic mode decomposition (E-DMD) \cite{williams2015data,li2017extended}; kernel dynamic mode decomposition (K-DMD) \cite{williams2014kernel}, naturally structured dynamic mode decomposition (NS-DMD) \cite{huang2017data}, deep dynamic mode decomposition (deep-DMD)\cite{yeung2017learning,lusch2018deep}. All these methods are based on data and accuracy of such approximations are discussed in \cite{zhang2017evaluating}. In \cite{johnson2017class}, the authors study the space of Koopman observables which also includes the states of the underlying system. The authors of \cite{johnson2017class} also show the fidelity of this class of observable functions and discuss ways to tune performance.  

% Every nonlinear system exhibits unique properties such as multiple equilibria, limit cycles, periodic orbits, bifurcation, etc., and there exists several methods in nonlinear systems theory to identify and describe each of those behaviors. However, these methods require the knowledge of models describing the evolution of nonlinear system and most of the methods require computing an energy function which varies with respect to every nonlinear system. Furthermore, it is very difficult to study higher dimensional systems using model-based approaches. 

\subsection{Problem Statement}
The main focus of this work is to look at an alternative approach to explore the phase space of nonlinear systems from time-series data. In particular, given the time-series data for a nonlinear system, it is of interest to know how many invariant manifolds the system has or if the system trajectories start from an invariant subspace, can the other invariant subspaces be identified? Further, if the behavior of a system in subspaces is known, then can these invariant subspaces be stitched to identify the evolution of system on global phase space? This work attempts to address these questions by leveraging the tools from Koopman operator theory. This work is motivated from the phase space analysis of the nonlinear system, bistable toggle switch which has two stable attractors and the system exhibits bistability. 
% The states of the bistable toggle switch correspond to the concentration of two repressor genes and their dynamics describe the time evolution of concentration of repressors when they are arranged in mutually inhibitory fashion.
% It is not easy to tune the bistable toggle switch system experimentally to exhibit bistability. 

\subsection{Summary of Contributions}

A nonlinear system may have multiple invariant sets. In this paper, we use the Koopman operator techniques to identify the different invariant subspaces from a global Koopman operator. This can be thought of as a top-down approach, where we use the global Koopman operator to zoom in on the state space and explore local dynamics. On the other hand, we also propose a bottom-up construction technique, where we use local Koopman operators to obtain a global Koopman operator for the system on the entire state space. This phase space stitching algorithm is of practical importance in the sense that often in experiments, one obtains time-series data from the different invariant subspaces and thus having multiple Koopman operators, each corresponding to a different invariant subspace. So if we need to predict the future trajectory from a new hitherto unseen initial condition, we may not know which of the different Koopman operators to use, for propagating this initial condition. This is because we may not know which invariant set the given initial condition belongs to. Our phase space stitching algorithm provides a technique to \emph{stitch}  multiple \emph{local} Koopman operators, to obtain a single global Koopman operator so that one can propagate any initial condition, without prior knowledge of the invariant set where the initial condition belongs to.
The phase space learning problem from this work serves as a basis for designing and planning future experiments. 
 
% Data-driven operator theoretic method, EDMD, is applied to compute the Koopman operator from the time-series data of a dynamical system. Ergodic properties of the dynamical system are invoked to study the ergodic partitions of the dynamical systems in the higher dimensional observable space which are then mapped on to the state space to identify the invariant partitions of the state space. Global exploration of the observable space based on the spectral properties of the local Koopman operator leads to identification of invariant subspaces. Finally, we present the phase space stitching algorithm to define the global Koopman operator by stitching the local evolutions of the system corresponding to every invariant subspace. 

The rest of the paper is organized as follows. Section \ref{sec:math_preliminaries} describes the preliminaries for operator theoretic methods. A brief overview on the computation of finite-dimensional Koopman operator is given in Section \ref{sec:DMD_EDMD}. Section \ref{sec:identify_inv_spaces} discusses discovering new invariant subspaces from the spectral properties of the Koopman operator. Section \ref{sec:state_space_partition} describes, using ergodic properties of the dynamical system, identification of the invariant subspaces. A phase space stitching algorithm is presented in Section \ref{sec:phase_space_stitching} to compute the Koopman operator corresponding to the complete phase space. Example studies corresponding to two nonlinear systems are presented in Section \ref{sec:simulation} and the paper concludes with final remarks in Section \ref{sec:conclusion}. 

% Notations
% =================================================================================================
%\textit{Notations:} The set of real numbers and complex numbers are denoted by $\mathbb{R}$ and $\mathbb{C}$ respectively. The set $\mathbb{R}^+$ denotes the set of positive real numbers. 

% =================================================================================================

% ================================================================================================= 
% Mathematical Preliminaries
% =================================================================================================
\section{Mathematical Preliminaries and Problem Statement}
\label{sec:math_preliminaries}
We discuss only the necessary mathematical background in this section. More technical details can be found in \cite{lasota2013chaos,mezic2005spectral,mezic2013analysis,budivsic2012applied,williams2015data} and the references within. Consider the following discrete-time nonlinear system
\begin{align}
    z_{t+1} = T(z_t)
    \label{eq:DT_NL_sys}
\end{align}
where $z \in {\cal M}$ and $T:{\cal M} \to {\cal M}$. The phase space ${\cal M}$ is assumed to be a compact Riemannian manifold with Borel $\sigma$ algebra $\mathcal{B({\cal M})}$ and measure $\mu$ \cite{mezic2005spectral} and the map $T$ is assumed to be a diffeomorphism. 
%and the system \eqref{eq:DT_NL_sys} is an ergodic dynamical system. In other words, the property of a dynamical system to be ergodic means that the system will have same behavior if we take the time average or space average of all states in the phase space. Overall the map $T$ is ergodic with respect to the measure $\mu$ and
Without loss of generality, $T$ can be assumed to be ergodic, since if $T$ is not ergodic, the phase space can be partitioned into ergodic partitions, where in each of the partitions $T$ is ergodic.

%this constraint can be relaxed when the system has ergodic partitions each corresponding to an invariant subspace of the phase space. 

Define a function ${\psi}:\mathcal{M} \rightarrow \mathbb{C}$ to be an observable, where ${\psi}$ is an element of the space of functions ${\cal G}$ acting on elements of $\cal M$. In general, for any given dynamical system, the evolution of state dynamics is monitored by the output function and this output function may be considered as an observable function. However, any \emph{nicely behaved} (to be made precise) function of the states can be an observable function. Normally, the space of square integrable functions on $\cal M$ is taken to be the space of observables. Hence the dimension of the function space ${\cal G}$ is infinite, with ${\cal G} = L_2({\cal M}, {\cal B}, \mu)$. With this, the Koopman operator is defined as follows: 
\begin{definition}[Koopman operator] For a dynamical system $x\mapsto T(x)$ and for ${\psi} \in \mathcal{G}$, the Koopman operator $({\bf U} : {\cal G} \to {\cal G})$ associated with the dynamical system is defined as
\[ [\mbox{\textbf U} {\psi}] (x) = \psi(T(x)). \]
\end{definition}
Notice that the function space, ${\cal G}$ is invariant under the action of Koopman operator which is a linear operator and it describes the evolution of the observables. Hence the Koopman operator defines a linear system on the space of functions. In other words, instead of studying the time evolution of system \eqref{eq:DT_NL_sys} over the state space ${\cal M}$ which may be a nonlinear evolution, we look at the linear evolution of \textit{observables} in a higher dimensional space.

%The Koopman operator theory provides a framework to study the dynamical systems based on data by connecting the time evolution of state to time evolution of observables (in the higher dimensional space) \cite{mezic2005spectral,rowley2009spectral,lasota2013chaos}. 

Apart from being linear, the Koopman operator satisfies the following properties:
\begin{itemize}
%\item Koopman operator is linear.
\item Koopman operator is unitary: given ${\cal G} = L_2({\cal M}, {\cal B}, \mu)$, and  the Koopman operator $\mbox{\textbf{U}}: L_2({\cal M}, {\cal B}, \mu) \to L_2({\cal M}, {\cal B}, \mu)$ is unitary, that is, 
\begin{align*}
    \parallel \mbox{\textbf{U}} h \parallel^2= & \int_{\cal M} |h(T(z))|^2d \mu(z)\\
    =&\int_{\cal M} | h(z)|^2 d\mu(z)=\parallel h\parallel^2
\end{align*}
and hence all the eigenvalues of $\mbox{\textbf{U}}$ lie on an unit circle and its eigenfunctions are orthogonal.  
\item Koopman operator is a positive operator: for any $\psi \geq 0$, $[\mbox{\textbf{U}} \psi] (x) \geq 0$.  
\end{itemize}

Similar properties can be arrived for the Koopman operator corresponding to continuous-time dynamical systems. In the next section, we briefly describe the popular DMD, EDMD algorithms that are used to compute a finite dimensional approximation of the Koopman operator from time-series data. 

% =================================================================================================
% Describe the details of Deep DMD here
% =================================================================================================
%\section{Deep Dynamic Mode Decomposition (Deep DMD)}
% =================================================================================================
\section{Dynamic Mode Decomposition (DMD) and Extended Dynamic Mode Decomposition (EDMD)}
\label{sec:DMD_EDMD}
Schmid and others introduced DMD in \cite{schmid2010dynamic} to approximate the Koopman operator that describes the coherent features of fluid flow and extract dynamic information from flow fields through the spectrum of the approximate Koopman operator. The authors in \cite{williams2015data} generalized the idea of DMD and extended it to what is now known in literature as EDMD. The approximate Koopman operator computed based on EDMD evolves in the space of finite set of observable functions. In this section, we briefly recall the EDMD algorithm and discuss the computation of approximate Koopman operator from time-series data. 

Consider the time-series data $[x_0,x_1,\cdots , x_k]$ from an experiment or simulation of a dynamical system and stack them as shown below. 
\begin{align*}
X_p = [x_0, x_1, \dots, x_{k-1}], \qquad X_f = [x_1,x_2, \dots,x_k],
\end{align*}
where for every $i$, $x_i \in {\cal M}$. Denote the set of dictionary functions or observables by ${\cal D} = \{\psi_1,\dots,\psi_N\}$ where $\psi_i \in L_2({\cal M},{\cal B},\mu)$ and $\psi_i:{\cal M} \to \mathbb{C}$. Let the span of these $N$ observables be denoted by ${\cal G}_{\cal D}$ such that ${\cal G}_{\cal D} \subset {\cal G}$ where ${\cal G} = L_2({\cal M}, {\cal B}, \mu)$. 
% \textcolor{red}{I think we can directly define $\cal G$ to be $L_2$ and not mention Banach/Hilbert space. This takes care of integrability issues as well.} 

%The spectral properties of Koopman operator and in particular, the dominant eigenvalues and their corresponding eigenfunctions play a significant role in analyzing the evolution of nonlinear systems. The leading eigenfunctions of the approximate Koopman operator can be identified provided if the choice of observable functions is rich enough and they cover the entire state space.

Define a vector valued observable function $\pmb{\Psi}:{\cal M} \to \mathbb{C}^N$ as follows. 
\begin{align*}
\pmb{\Psi}(x):=\begin{bmatrix}\psi_1(x) & \psi_2(x) & \cdots & \psi_N(x)\end{bmatrix}^\top.
\end{align*}
%
%Note that the vector valued observable function $\psi$ maps from state space to the space of observables. 
Let $\phi, \hat{\phi} \in {\cal G}_{\cal D}$ be any two functions in the space of observables. Then they can be expressed as 
\begin{align*}
\phi = \sum_{k=1}^N a_k\psi_k=\pmb{\Psi}^T \pmb{a},\qquad \hat{\phi} = \sum_{k=1}^N \hat{a}_k\psi_k=\pmb{\Psi}^T {\hat{\pmb{a}}}
\end{align*}
for some set of coefficients $\pmb{a},{\hat{\pmb{a}}}\in \mathbb{C}^N$. Then the functions, $\phi$ and $\hat{\phi}$ can be related through the Koopman operator as shown below. 
\[ \hat{\phi}(x)=[\mbox{\textbf{U}}\phi](x)+r,\]
where $r$ is a residual function that appears because $\mathcal{G}_{\cal D}$ is not necessarily invariant to the action of the Koopman operator. Minimizing the residue function $r$ is posed as a least square problem as shown below. 
% 
% such that the a better approximation of the Koopman operator can be identified as a least square problem as shown below 
% The finite dimensional approximate Koopman operator $\cal K$ minimizes this residual $r$ and the matrix $\cal K$ is obtained as a solution of least square problem as follows 
%
\begin{equation}
\min_{\cal K}\parallel{\cal K} {Y_p}-{Y_f}\parallel_F,
\label{edmd_op}
\end{equation}
where ${\cal K}$ is the finite dimensional approximation of the Koopman operator $\mbox{\textbf{U}}$ and 
% 
% \begin{eqnarray}
% \label{edmd1}
\begin{align*}
& {Y_p}=\pmb{\Psi}(X_p) = [\pmb{\Psi}(x_0), \pmb{\Psi}(x_1), \cdots , \pmb{\Psi}(x_{k-1})]\\
& {Y_f}=\pmb{\Psi}(X_f) = [\pmb{\Psi}(x_1), \pmb{\Psi}(x_2), \cdots , \pmb{\Psi}(x_k)],
\end{align*}
% \end{eqnarray}
% 
with ${\cal K}\in\mathbb{C}^{N\times N}$. The solution to the optimization problem \eqref{edmd_op} can be obtained explicitly and the approximate Koopman operator is given by 
\begin{align*}
{\cal K}={Y_f}{Y_p}^\dagger 
% \label{EDMD_formula}
\end{align*}
where ${Y_p}^{\dagger}$ is the pseudo-inverse of matrix $Y_p$. Note that, DMD is a special case of EDMD algorithm with $\pmb{\Psi}(x) = x$. Once the approximate Koopman operator is obtained, the spectral properties of Koopman operator and in particular, the dominant eigenvalues and their corresponding eigenfunctions are used for analyzing the evolution of nonlinear systems. However, one constraint is that the dictionary functions should be rich enough to approximate the Koopman spectrum to desired degree of accuracy.

% =================================================================================================
% Phase space exploration using global Koopman operator
% =================================================================================================
\section{Global Phase Space Exploration Using Approximate Koopman Operator}
This section describes how from the spectral properties of the Koopman operator, the global phase space of any dynamical system can be explored. In particular, the results in this section can be effortlessly applied to identify multiple invariant subspaces in a phase space and to describe the global behavior of the phase space assuming the local behaviors of the system are known. 

\subsection{Discovering Multiple Invariant Subspaces Using Koopman Spectra}
\label{sec:identify_inv_spaces}
A Koopman operator is an exact representation of a dynamical system on an infinite dimensional Hilbert space and the eigen spectrum and eigenfunctions of a Koopman operator contains all the information about the underlying dynamical system. More specifically, we start in an invariant subspace by computing a local Koopman operator and with addition of new data points, we update the Koopman operator and eventually compute the Koopman operator for the entire state space. An inherent assumption being made here is that, eventually we get the time-series data from all the invariant subspaces in the system. Otherwise, it is intractable to construct the global Koopman operator. Recall that the Koopman operator is unitary and it has eigenvalues on the unit circle.
% 
% We begin this section by recalling the following definition from linear algebra on the distinct eigenvectors of a finite dimensional operator \cite{horn2012matrix}. Also recall that since the Koopman operator is unitary, it has eigenvalues on the unit circle.
% 
% \begin{notation}
% \sai{The geometric multiplicity of eigenvalue, $\lambda = 1$, that is, number of distinct eigenvectors associated with the stable eigenvalue $\lambda = 1$ is given by $||\mathcal{K}||_\rho$.}
% \label{def:Koopman_gm}
% \end{notation}
% 
%   
%---------------------------------------------------------
\begin{lemma}
Let  ${\cal M}_p\subset {\cal M}$, for $p=1,\cdots ,v$ be the $v$ invariant subsets of a dynamical system $x\mapsto T(x)$, where ${\cal M}_p\cap {\cal M}_q=\phi$ for $p\neq q$. Let ${\cal M}_p \subset {\cal M}$  be a local invariant subspace and assume $^pX$ be the time-series data from this subspace. Let ${\cal K}_{p}$ be the local Koopman operator trained on $^pX$ such that
% \textcolor{red}{explain the notation}
%
\begin{align}
    \max_{x_{0} \in {\cal M}_p} || \mathcal{K}_{p}^n\pmb{\Psi}(x_0) - \pmb{\Psi}(T^n(x_0))|| \leq \varepsilon_{p} 
    \label{eq_Koopman_learning_error}
\end{align}
where $\varepsilon_{p}>0$ is a small positive number and $n$ is the number of time steps for which the initial condition $x_0$ is evolved. The  notation, ${\cal K}_p^n$ indicates that Koopman operator ${\cal K}_p$ is raised to the power $n$. 
% $x_{p_t} \in {\cal M}_p$, $t$ denote the time elapsed \textcolor{red}{How are $x_0$ and $x_{p_t}$ related? }
%
Then, given a subspace ${\cal M}_q$ such that ${\cal M}_p \subset {\cal M}_q \subseteq {\cal M}$ 
% and the new time-series data set, $X_q$, 
the Koopman learning error on the new data-set $X_q$ satisfies:
\begin{equation}
    \begin{aligned}
    & \min_{x_{0} \in {\cal M}_q} ||\mathcal{K}_{p}^n\pmb{\Psi}(x_{0})  - \pmb{\Psi}(T^n(x_0)) ||  \\
    & \geq \max_{x_{0} \in {\cal M}_p} || \mathcal{K}_{p}^n \pmb{\Psi}(x_{0}) - \pmb{\Psi}(T^n(x_0))||. 
    \end{aligned}
    \label{eq:K_error_ineq}
\end{equation}
\label{lemma_K_learning_error}
\end{lemma}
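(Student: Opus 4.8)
The plan is to reduce the inequality to a statement about the newly seen data lying in a \emph{foreign} invariant set, and then to exploit two facts already available above: that $\mathcal{K}_p = {Y_f}{Y_p}^\dagger$ is the least-squares-optimal operator on the $\mathcal{M}_p$ data, and that the Koopman operator is unitary with mutually orthogonal eigenfunctions. First I would decompose the minimization domain. Since the invariant sets satisfy $\mathcal{M}_p \cap \mathcal{M}_q = \phi$ for distinct indices while $\mathcal{M}_p \subset \mathcal{M}_q \subseteq \mathcal{M}$, any initial condition $x_0$ that both attains the minimum and is not already governed by the $\mathcal{M}_p$ dynamics must lie in $\mathcal{M}_q \setminus \mathcal{M}_p$, i.e.\ inside one of the other invariant sets $\mathcal{M}_r$, $r \neq p$. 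The content of the lemma is therefore that the best prediction $\mathcal{K}_p$ can make on such a foreign point is still no better than its \emph{worst} prediction on the set it was trained on.

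For the core estimate I would expand $\pmb{\Psi}(x_0)$ in the eigenbasis of the true unitary Koopman operator $\mathbf{U}$. Because $\mathcal{M}_p$ and $\mathcal{M}_r$ are disjoint invariant sets, the eigenfunctions supported on $\mathcal{M}_r$ are orthogonal to those supported on $\mathcal{M}_p$, so the component of $\pmb{\Psi}(x_0)$ that actually drives $\pmb{\Psi}(T^n(x_0))$ lives in a spectral subspace on which $\mathcal{K}_p$ carries no information. Propagating by $\mathcal{K}_p^n$ cannot reproduce this $\mathcal{M}_r$-part of $\pmb{\Psi}(T^n(x_0))$, so the residual $\|\mathcal{K}_p^n \pmb{\Psi}(x_0) - \pmb{\Psi}(T^n(x_0))\|$ is bounded below by the norm of the unrepresented component. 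This is the quantity I would carry forward as the lower bound.

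To close the inequality I would invoke the optimality of $\mathcal{K}_p$. Since it is the exact minimizer of the fitting error over the $\mathcal{M}_p$ samples, the training value $\max_{x_0 \in \mathcal{M}_p}\|\mathcal{K}_p^n\pmb{\Psi}(x_0) - \pmb{\Psi}(T^n(x_0))\| \leq \varepsilon_p$ is the smallest worst-case error any operator of this form can achieve on $\mathcal{M}_p$. The operator was never afforded the freedom to reduce its error on the foreign samples, so the unrepresented-component bound from the previous step is at least this optimal training value, which is exactly \eqref{eq:K_error_ineq}.

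The hard part is making the lower bound \emph{uniform} over the new data: the statement bounds the minimum over $\mathcal{M}_q$, so one must show that \emph{every} foreign initial condition, not merely the worst one, incurs error at least $\max_{x_0 \in \mathcal{M}_p}\|\cdots\|$. This requires a genuine separation between the spectra of the local operators on distinct invariant sets; if two invariant sets had nearly identical local dynamics, the orthogonality argument would only yield a vanishing lower bound and the inequality could collapse to an equality. I would therefore expect the cleanest route to add an explicit spectral-gap or subspace-separation hypothesis between $\mathcal{M}_p$ and the $\mathcal{M}_r$, and I would also want the minimization stated over the newly added points $\mathcal{M}_q \setminus \mathcal{M}_p$, since retaining the already-fitted points of $\mathcal{M}_p$ inside the minimum would pull it back below the training maximum.
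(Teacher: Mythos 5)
Your route is genuinely different from the paper's, and your closing observations are the most valuable part of it. The paper's own proof is a short contradiction argument: it negates \eqref{eq:K_error_ineq}, combines the negation with \eqref{eq_Koopman_learning_error} to get $\min_{x_{0}\in{\cal M}_q}\|{\cal K}_p^n\pmb{\Psi}(x_0)-\pmb{\Psi}(T^n(x_0))\| < \varepsilon_p$, and then asserts that this means ${\cal K}_p$ ``satisfies'' the time-series data of ${\cal M}_q$, hence ${\cal M}_q\subseteq{\cal M}_p$, contradicting ${\cal M}_p\subset{\cal M}_q$. You instead attempt a direct lower bound, expanding $\pmb{\Psi}$ in the eigenbasis of the unitary Koopman operator and using the fact that eigenfunctions supported on disjoint invariant sets are mutually orthogonal, so that ${\cal K}_p$ cannot reproduce the component of the dynamics living on a foreign invariant set. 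That constructive argument is nowhere in the paper, and it is the right way to try to make the claim quantitative.

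However, your proposal does not close, and you say so yourself; what is worth emphasizing is that it cannot close, for reasons intrinsic to the statement, and that the paper's proof suffers from the same defects without acknowledging them. First, since ${\cal M}_p\subset{\cal M}_q$, the minimum over ${\cal M}_q$ is automatically at most the minimum over ${\cal M}_p$, hence at most the maximum over ${\cal M}_p$; so \eqref{eq:K_error_ineq} can hold only with equality and generically fails unless the minimum is restricted to ${\cal M}_q\setminus{\cal M}_p$ --- exactly the restatement you ask for. The paper's contradiction argument hides this: its negated hypothesis ($\min<\max$) is generically \emph{true}, and the key step ``error below $\varepsilon_p$ at the minimizing point $\Rightarrow$ ${\cal M}_q\subseteq{\cal M}_p$'' is unjustified, since the minimizer may simply be a point of ${\cal M}_p$ itself. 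Second, your spectral-separation caveat is also correct: orthogonality of eigenfunctions on disjoint invariant sets gives a lower bound equal to the norm of the unrepresented component, but nothing in the hypotheses prevents that component from being arbitrarily small (e.g.\ two invariant sets with nearly identical local dynamics), so no argument can reach the stated inequality without an added gap assumption. In short, your attempt is an honest and sharper diagnosis of the lemma than the paper's own proof, but as a proof of the lemma as stated it has a genuine gap --- one the paper shares.
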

\begin{proof}
We argue that the proof follows by contradiction. Suppose, Eq. \eqref{eq:K_error_ineq} does not hold. Then from Eq. \eqref{eq_Koopman_learning_error}, we have:
\begin{equation}
    \begin{aligned}
    & \min_{x_{0} \in {\cal M}_q} ||\mathcal{K}_{p}^n \pmb{\Psi}(x_{0}) - \pmb{\Psi}(T^n(x_0)) || \\
    & <  \max_{x_{0} \in {\cal M}_p}|| \mathcal{K}_{p}^t \pmb{\Psi}(x_{0}) - \pmb{\Psi}(T^n(x_0))|| \leq \varepsilon_{p}
    \end{aligned}
\end{equation}
This essentially means that, the Koopman operator, ${\cal K}_{p}$ trained on the time-series dataset from ${\cal M}_p$ satisfies the time-series data from ${\cal M}_q$ as well. This implies that, ${\cal M}_q \subseteq {\cal M}_p$ which leads to a contradiction. Therefore, Eq. \eqref{eq:K_error_ineq} holds and hence the proof. 
\end{proof}

%---------------------------------------------------------
From Lemma \ref{lemma_K_learning_error}, it can be said that for any time-series dataset from a subspace which is a superset of ${\cal M}_p$, if the Eq. \eqref{eq:K_error_ineq} holds, then the Koopman operator has to be recomputed corresponding to the new time-series dataset. The following result formally establishes the discovery of an invariant subspace based on the spectral properties of the approximate local Koopman operators.  
%---------------------------------------------------------

\begin{proposition}
Let the subspace ${\cal M}_p \subset {\cal M}$ be the smallest invariant subspace of ${\cal M}$ and the corresponding Koopman operator is given by ${\cal K}_{p}$. Choose ${\cal M}_q$ such that ${\cal M}_p \subset {\cal M}_q \subseteq {\cal M}$ and under the scenario where \eqref{eq:K_error_ineq} holds, the Koopman operator corresponding to ${\cal M}_q$ is given by ${\cal K}_q$. 
% \textcolor{green}{Applying Lemma} \ref{lemma_K_learning_error} \textcolor{red}{Does not sound good}, the updated approximate Koopman operator corresponding to ${\cal M}_q$ be ${\cal K}_{{\cal M}_q}$. 
Then the discovery of new invariant subspaces in the phase space of the original dynamical system is such that the difference $||{\cal K}_{q}||_{\rho} - ||{\cal K}_{p}||_{\rho}$ increases monotonically, where $\parallel {\cal K} \parallel_{\rho}$ denote the geometric multiplicity of eigenvalue, $\lambda = 1$, that is, number of distinct eigenvectors associated with the stable eigenvalue $\lambda = 1$. 
\end{proposition}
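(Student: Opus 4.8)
The plan is to reduce the claim to a counting statement about the eigenspace of the Koopman operator at the eigenvalue $\lambda = 1$. First I would characterize this eigenspace: an eigenfunction $\psi$ with $\mathbf{U}\psi = \psi$ satisfies $\psi(T(x)) = \psi(x)$, i.e.\ $\psi$ is invariant along the trajectories of \eqref{eq:DT_NL_sys}. Invoking the standing ergodicity assumption from Section \ref{sec:math_preliminaries} --- that the phase space decomposes into ergodic partitions on each of which $T$ is ergodic --- an invariant function must be constant (almost everywhere) on each ergodic component. Hence a basis for the $\lambda = 1$ eigenspace is furnished by the indicator functions of the distinct invariant subsets, and the geometric multiplicity $\| {\cal K} \|_{\rho}$ equals precisely the number of disjoint invariant subsets contained in the domain on which the operator is trained.

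With this characterization in hand, the monotonicity is almost immediate. Since ${\cal M}_p$ is the smallest invariant subspace, it constitutes a single ergodic component, so $\| {\cal K}_p \|_{\rho} = 1$. By Lemma \ref{lemma_K_learning_error}, the hypothesis that \eqref{eq:K_error_ineq} holds on $X_q$ certifies that ${\cal M}_q \not\subseteq {\cal M}_p$, so the enlarged domain ${\cal M}_q$ contains at least one invariant subset disjoint from ${\cal M}_p$. Each such newly included disjoint invariant subset contributes one additional independent invariant indicator function, hence one additional eigenvector at $\lambda = 1$, giving $\| {\cal K}_q \|_{\rho} \ge \| {\cal K}_p \|_{\rho} + 1$. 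Iterating the discovery procedure --- enlarging the domain to successively absorb the new invariant subsets flagged by the residual test \eqref{eq:K_error_ineq} --- each step strictly increases the count, so the difference $\| {\cal K}_q \|_{\rho} - \| {\cal K}_p \|_{\rho}$ grows monotonically with the number of discovered subspaces, which is the assertion.

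The step I expect to be the main obstacle is the passage from the exact, infinite-dimensional unitary operator $\mathbf{U}$ --- for which the indicator-function basis is exact --- to the finite-dimensional data-driven approximations ${\cal K}_p$ and ${\cal K}_q$ returned by EDMD. Geometric multiplicity is notoriously unstable under perturbation, so I cannot simply assert that the approximate operator has an eigenvalue exactly at $1$ with the correct multiplicity. The careful part is to show, using the learning-error bound $\varepsilon_p$ of \eqref{eq_Koopman_learning_error} together with the richness of the dictionary ${\cal D}$, that each exact invariant indicator is approximated within the span of ${\cal D}$ by an approximate eigenfunction whose eigenvalue lies within $O(\varepsilon_p)$ of $1$, and that these approximate eigenfunctions remain linearly independent. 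Only then does the clean count of invariant subsets transfer to the computed $\| {\cal K} \|_{\rho}$, and with it the monotonicity of the difference.
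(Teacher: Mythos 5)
Your proposal is sound, but it is considerably more substantive than the paper's own argument, which consists of a single sentence: for ${\cal M}_q \supset {\cal M}_p$, the inequality $\parallel {\cal K}_q \parallel_{\rho} \geq \parallel {\cal K}_p \parallel_{\rho}$ ``is always satisfied,'' with no justification offered. What you supply is precisely the missing justification: under the ergodic-partition assumption of Section \ref{sec:math_preliminaries}, the $\lambda = 1$ eigenspace is spanned by indicator functions of the ergodic components, so $\parallel \cdot \parallel_{\rho}$ counts the invariant sets covered by the training domain; enlarging the domain can only add components, and Lemma \ref{lemma_K_learning_error} (via the hypothesis that \eqref{eq:K_error_ineq} holds) certifies that ${\cal M}_q$ genuinely contains an invariant set disjoint from ${\cal M}_p$, which upgrades the paper's non-strict inequality to the strict bound $\parallel {\cal K}_q \parallel_{\rho} \geq \parallel {\cal K}_p \parallel_{\rho} + 1$. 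This counting argument is exactly the content the paper defers to Section \ref{sec:state_space_partition} (eigenfunctions supported on ergodic partitions, multiplicity bounded by the number of ergodic sets), so your route stays within the paper's framework while actually carrying out the proof the paper only asserts. The one caveat is the obstacle you flag at the end: transferring the multiplicity count from the exact unitary operator $\mbox{\textbf{U}}$ to the finite-dimensional EDMD matrices ${\cal K}_p$ and ${\cal K}_q$, whose eigenvalues will generally lie only near $1$ and whose geometric multiplicities are perturbation-sensitive, is a genuine unresolved step --- but it is one the paper's proof silently ignores rather than resolves, so neither argument is complete on this point, and yours at least identifies concretely what would need to be shown (dictionary richness plus the error bound \eqref{eq_Koopman_learning_error} controlling the spectral perturbation).
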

\begin{proof}
% Recall from definition \ref{def:Koopman_gm}, the metric that measures the geometric multiplicity of eigenvalues corresponding to stable eigenvalue $\lambda=1$. Hence, for the smallest invariant subspace, ${\cal M}_p \subset {\cal M}$, it is given by $\parallel {\cal K}_{{\cal M}_p} \parallel _{\rho}$. Given that there is a subspace ${\cal M}_q$ such that ${\cal M}_p \subset {\cal M}_q$ and its corresponding Koopman operator is ${\cal K}_{{\cal M}_q}$. Since, ${\cal M}_q \supset {\cal M}_p$, we have $\parallel {\cal K}_{{\cal M}_q} \parallel_{\rho} \geq \parallel {\cal K}_{{\cal M}_p} \parallel_{\rho}$ and hence the proof. 
The proof follows by noting that for a subspace ${\cal M}_q \supset {\cal M}_p$, the inequality $\parallel {\cal K}_q \parallel_{\rho} \geq \parallel {\cal K}_p \parallel_{\rho}$ is always satisfied.
\end{proof}
% 
% =================================================================================================
% 

In the next section, we show how the complete phase space can be separated into ergodic partitions (invariant subspaces) from the spectral properties of the Koopman operator. 
% 
% 
% Partitioning the state space
% =================================================================================================
\subsection{Invariant Phase Space Decomposition Using Global Koopman Operator}
\label{sec:state_space_partition}
In this section we discuss how can the global Koopman operator be used to explore a dynamical system \emph{locally}. We recall some relevant concepts from \cite{petersen1989ergodic,budivsic2012applied} for the self-containment of this paper. Although we present the results below in terms of a finite  dimensional (approximate) Koopman operator, they are indeed general and are applicable to infinite dimensional Koopman operators. Interested readers can refer \cite{petersen1989ergodic,budivsic2012applied} and the references within for detailed explanation.

%Suppose ${\cal K}$ be the approximate Koopman operator corresponding to the discrete-time dynamical system \eqref{eq:DT_NL_sys}. From the standard results in linear algebra \cite{horn2012matrix} for finite dimensional matrix theory, we recall that a matrix is diagonalizable if the algebraic multiplicity of eigenvalues is equal to their geometric multiplicity. 
% 

%In general, for the Koopman operator ${\cal K}$, the algebraic multiplicity of an eigenvalue $\lambda$ depends on the choice of observable function (defined over the function space ${\cal G}$) and on the dynamics $T$ (refer \eqref{eq:DT_NL_sys}) of the system itself. The richer the observable functions, better is the approximation of the dynamics $T$. 

Let $T$ be a measure preserving transformation and ${\cal G} = L_2({\cal M}, {\cal B}, \mu)$, then all the eigenvalues of the associated Koopman operator ${\cal K}$ lie on the unit circle \cite{budivsic2012applied}. 
%The following summary on the ergodic partitions of the state spaces is taken from \cite{budivsic2012applied} for the self-containment of the paper. 
Moreover, when $T$ is an ergodic transformation, that is, for any set ${\cal S} \subset {\cal M}$, such that $T^{-1}({\cal S}) = {\cal S}$, either $\mu({\cal S}) = 0$ or $\mu({\cal S}) = 1$, then all eigenvalues of ${\cal K}$ are simple \cite{petersen1989ergodic,budivsic2012applied}. However, if $T$ is not ergodic, then the state space can be partitioned into subsets ${\cal S}_i$ (minimal invariant subspaces) such that the restriction $T|_{{\cal S}_i}: {\cal S}_i \to {\cal S}_i$ is ergodic. A partition of the state space into invariant sets is called an ergodic partition or stationary partition.

%\textcolor{red}{The ergodic partition of the state space is unique and doesn't depend on the choice of observable functions as long as the observable functions are rich enough to cover the state space.} \textcolor{blue}{The ergodic partition of a phase-space is a property of the system and as long the set of observables are rich enough to approximate the Koopman operator to a desired degree of accuracy, the Koopman operator identifies the invariant sets.}
Furthermore, all ergodic partitions are disjoint and they support mutually singular functions from ${\cal G}$ \cite{budivsic2012applied}. Therefore, the number of linearly independent eigenfunctions of ${\cal K}$ corresponding to an eigenvalue $\lambda$ is bounded above by the number of ergodic sets in the state space \cite{budivsic2012applied}. The dynamics of the system dictates number of ergodic partitions (invariant sets) in the state space. The following results summarize the above discussion. 
\begin{lemma}
Let $\lambda$ be an eigenvalue of the Koopman operator ${\cal K}$.  Suppose if the algebraic multiplicity of the eigenvalue $\lambda$, is equal to the geometric multiplicity, then the corresponding eigenfunctions are linearly independent. 
\label{lemma:am_gm} 
\end{lemma}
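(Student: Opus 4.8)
The plan is to reduce the statement to the standard relationship between the algebraic and geometric multiplicities of an eigenvalue of the finite-dimensional operator ${\cal K}$. I treat ${\cal K}\in\mathbb{C}^{N\times N}$ and interpret ``the corresponding eigenfunctions'' as the eigenvectors spanning the eigenspace $E_\lambda := \ker({\cal K}-\lambda I)$, whose coordinates in the dictionary $\pmb{\Psi}$ recover the associated Koopman eigenfunctions. First I would fix notation: write $g(\lambda)=\dim E_\lambda$ for the geometric multiplicity and $a(\lambda)$ for the algebraic multiplicity, i.e.\ the order of $\lambda$ as a root of $\det({\cal K}-sI)$. By definition $g(\lambda)$ is precisely the maximal number of linearly independent eigenfunctions attached to $\lambda$, so any basis of $E_\lambda$ consists of $g(\lambda)$ linearly independent eigenfunctions.

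Next I would invoke the generalized-eigenspace decomposition of ${\cal K}$: the algebraic multiplicity equals the dimension of the generalized eigenspace $\hat{E}_\lambda := \ker({\cal K}-\lambda I)^{N}$, and one always has the chain $E_\lambda \subseteq \hat{E}_\lambda$, hence $g(\lambda)\le a(\lambda)$. Equality holds exactly when every Jordan block associated with $\lambda$ is $1\times 1$, i.e.\ when $\lambda$ is non-defective. Under the hypothesis $a(\lambda)=g(\lambda)$, the inclusion $E_\lambda \subseteq \hat{E}_\lambda$ together with the equality of dimensions forces $E_\lambda=\hat{E}_\lambda$, so the entire generalized eigenspace is spanned by genuine eigenfunctions and no Jordan chains of length greater than one occur.

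Finally I would conclude: the hypothesis produces exactly $a(\lambda)=g(\lambda)$ eigenfunctions forming a basis of $E_\lambda=\hat{E}_\lambda$, and as basis vectors of a vector space they are linearly independent by construction. The argument carries no genuine computational difficulty; the only point requiring care—and the closest thing to an obstacle—is definitional rather than technical. One must make precise that the equality $a(\lambda)=g(\lambda)$ is exactly the semisimplicity (non-defectiveness) condition that excludes generalized eigenfunctions beyond the true eigenfunctions, and one must read ``the corresponding eigenfunctions'' as a basis of $E_\lambda$ rather than an arbitrary spanning set that could contain dependent vectors. Once this is clarified, linear independence is immediate from the basis property, completing the proof. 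I would also remark that for the exact, infinite-dimensional Koopman operator this conclusion is automatic, since unitarity (established earlier) already guarantees a complete orthogonal system of eigenfunctions; the content of the lemma is to transfer this non-defectiveness to the approximate operator ${\cal K}$ whenever the two multiplicities agree.
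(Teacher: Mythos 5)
Your proposal is correct, and it is essentially the paper's own argument written out in full: the paper's proof is a one-line citation to standard results on matrices (Horn and Johnson), and your generalized-eigenspace argument, with the observation that $a(\lambda)=g(\lambda)$ is exactly non-defectiveness and that a basis of $\ker({\cal K}-\lambda I)$ is linearly independent by construction, is precisely the standard result being cited. Your closing remark correctly identifies the definitional (near-tautological) nature of the statement, which the paper glosses over entirely.
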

\begin{proof}
The proof follows from standard results on matrices \cite{horn2012matrix}. 
\end{proof}
\begin{lemma}
Let ${\cal K}$ be the Koopman operator and its corresponding eigenvalue $\lambda$ has an algebraic multiplicity of $p$. Then $p$ linearly independent functions map to at most $p$ invariant subspaces in the state space.  
\label{lemma:finding_invariant_subspaces}
\end{lemma}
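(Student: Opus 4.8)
The plan is to combine the spectral rigidity forced by unitarity with the ergodic decomposition of the state space recalled above. First I would record that, since ${\cal K}$ is unitary on ${\cal G} = L_2({\cal M},{\cal B},\mu)$, it is a normal operator and hence diagonalizable, so the algebraic multiplicity $p$ of $\lambda$ equals its geometric multiplicity. By Lemma \ref{lemma:am_gm} this yields a $p$-dimensional eigenspace $E_{\lambda} = \{\phi \in {\cal G} : {\cal K}\phi = \lambda\phi\}$ spanned by $p$ linearly independent eigenfunctions. This reduces the claim to counting how many distinct invariant subsets these $p$ eigenfunctions can be supported on.

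Next I would invoke the ergodic partition $\{{\cal S}_i\}$ of the state space, on which each restriction $T|_{{\cal S}_i}$ is ergodic and, as recalled above, $\lambda$ is a \emph{simple} eigenvalue of the restricted operator ${\cal K}|_{{\cal S}_i}$ whenever it lies in its spectrum. The central step is to show that every $\phi \in E_{\lambda}$ splits along this partition: restricting the identity ${\cal K}\phi = \lambda\phi$ to ${\cal S}_i$ shows that $\phi|_{{\cal S}_i}$ is itself an eigenfunction of ${\cal K}|_{{\cal S}_i}$ for $\lambda$, and by simplicity it is either $0$ or a scalar multiple of a single fixed eigenfunction $\phi_i$ supported on ${\cal S}_i$. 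Since the partitions are disjoint and support mutually singular functions, the nonzero $\phi_i$ have disjoint supports and are therefore linearly independent.

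Counting then closes the argument. The eigenspace $E_{\lambda}$ is spanned by the $\phi_i$ ranging over exactly those indices $i$ for which $\lambda$ belongs to the spectrum of ${\cal K}|_{{\cal S}_i}$, so $\dim E_{\lambda}$ equals the number of such invariant subsets. Because $\dim E_{\lambda} = p$, the $p$ linearly independent eigenfunctions are supported on, and hence map to, precisely that number of invariant subspaces, which is at most $p$.

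The main obstacle I anticipate is the restriction/splitting step: one must justify that an $L_2$ eigenfunction of the global operator genuinely decomposes into eigenfunctions of the restricted operators, and that simplicity on each ergodic component together with mutual singularity forces the local pieces to be linearly independent rather than merely spanning. The difficulty is thus in assembling the cited ergodic-theoretic facts into a clean counting statement, not in any new quantitative estimate.
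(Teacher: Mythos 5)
Your proposal is correct and follows essentially the same route as the paper, which offers no argument of its own beyond citing \cite{budivsic2012012applied}---more precisely, citing \cite{budivsic2012applied}---and whose preceding discussion recalls exactly the facts you assemble: unitarity, the ergodic partition, simplicity of eigenvalues on each ergodic component, and mutual singularity of functions supported on distinct components. Your write-up is simply a detailed execution of the counting argument that the paper's one-line citation defers to.
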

\begin{proof}
The result follows from \cite{budivsic2012applied}.
\end{proof}
Suppose ${\cal M}_p$ is an invariant subspace of ${\cal M}$. Let ${\cal K}_p$ be the corresponding Koopman operator acting on the vector valued observable denoted by $\pmb{\Psi}_p$ which is obtained by stacking all the observable functions in ${\cal M}_p$. We claim that any point $x$ outside of ${\cal M}_p$ will not satisfy the local Koopman invariant subspace equation with minimal error. In particular, Eq. \eqref{eq_Koopman_learning_error} does not hold and we obtain:
\begin{align*}
    \parallel {\cal K}_{p}^n \pmb{{\Psi}}_{p}(x) - \pmb{\Psi}_{p}(T^n(x)) \parallel > \varepsilon_{p} \; \mbox{for} \; x \in {\cal M}\setminus{\cal M}_p.     
\end{align*}
%---------------------------------------------------------
%---------------------------------------------------------
% \begin{proposition}
% \sai{rewrite}
% Let ${\cal K}$ denote the global Koopman operator for a dynamical system \eqref{eq:DT_NL_sys}, defined on the set of observables $\psi_{global}.$  Then define the local function ${\cal K}_{{\cal M}_p} \equiv {\cal K}|_{{\cal M}_p}$, i.e. the restriction of the global Koopman operator to the subspace ${\cal M}_p$. Then ${\cal K}_{{\cal M}_p}$ defines a local Koopman operator invariant to the local Koopman invariant subspace functions $\psi_{{\cal M}_p}(x) \equiv \psi(x)|_{{\cal M}_p}$.   
% \end{proposition}

% \textcolor{red}{Proof?}
%---------------------------------------------------------
%---------------------------------------------------------
% \begin{proposition}
% Let ${\cal K}_{global}$ denote the global Koopman operator for a dynamical system, defined on the set of observables $\psi_{global}.$   Let ${\cal K}_{{\cal M}_p}$ be a local Koopman operator invariant on the observable function $\psi_{{\cal M}_p}$. Suppose ${\cal K}_{{\cal M}_p} \psi_{{\cal M}_p}(x) = \psi_{{\cal M}_p}(T(x))$ for all $x \in {\cal M}_p$ and correspondingly $\psi_{{\cal M}_p} \in {\cal G}_{{\cal M}_p}$.  Then if $\psi_{{\cal M}_p}$ is state-inclusive, we  $\psi_{global}(x)|_{{\cal M}_p} = \psi_{{\cal M}_p}(x)$ for all $x \in {\cal M}_p$.
% \end{proposition}
%---------------------------------------------------------
%---------------------------------------------------------
The following result describes how the Koopman operator corresponding to each of the local invariant subspace can be identified from the global Koopman operator. 
%---------------------------------------------------------
%---------------------------------------------------------
\begin{proposition}
Let ${\cal K}$ denote the global Koopman operator for a dynamical system \eqref{eq:DT_NL_sys}. Suppose every eigenvalue of ${\cal K}$ satisfies that its algebraic multiplicity is equal to its geometric multiplicity, ${\cal K}$ can be block diagonalized, where each block corresponds to a local invariant Koopman operator. 
\label{prop:blk_diagoanl_K}
\end{proposition}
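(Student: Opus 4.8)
The plan is to combine the ergodic decomposition of the state space recalled above with the diagonalizability that the algebraic-equals-geometric multiplicity hypothesis supplies, and then read off the block structure from the invariance of each ergodic partition. Since $T$ is assumed measure preserving, the eigenvalues of ${\cal K}$ lie on the unit circle; and since $T$ need not be globally ergodic, the state space partitions into disjoint minimal invariant sets ${\cal M}_1,\dots,{\cal M}_v$ on each of which $T|_{{\cal M}_p}$ is ergodic. I would begin by fixing this ergodic partition and letting ${\cal G}_p \subset {\cal G}$ denote the subspace of observables supported on ${\cal M}_p$.

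First I would show that each ${\cal G}_p$ is ${\cal K}$-invariant. For $\psi \in {\cal G}_p$ one has $[{\cal K}\psi](x) = \psi(T(x))$, which vanishes unless $T(x)\in{\cal M}_p$, i.e. unless $x \in T^{-1}({\cal M}_p) = {\cal M}_p$ by invariance of the partition (recall $T$ is a diffeomorphism); hence ${\cal K}\,{\cal G}_p \subseteq {\cal G}_p$, and the restriction ${\cal K}|_{{\cal G}_p}$ is precisely the local Koopman operator ${\cal K}_p$. Next, because distinct ergodic partitions support mutually singular functions, the subspaces ${\cal G}_p$ are mutually orthogonal and, since $\bigcup_p {\cal M}_p = {\cal M}$, their direct sum exhausts the observable space, ${\cal G} = \bigoplus_{p=1}^{v} {\cal G}_p$. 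Relative to any basis adapted to this orthogonal direct sum, ${\cal K}$ is already block diagonal with $p$-th block ${\cal K}_p$.

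The role of the multiplicity hypothesis is to guarantee that this decomposition is both complete and free of coupling across partitions. I would invoke Lemma \ref{lemma:am_gm} to conclude that ${\cal K}$ is diagonalizable, so it admits a full eigenbasis with no generalized (Jordan) eigenvectors; within each block $T|_{{\cal M}_p}$ is ergodic, so every eigenvalue of ${\cal K}_p$ is simple and its eigenfunction lives on that single partition. Grouping these partition-supported eigenfunctions yields an eigenbasis of the global operator that respects the direct-sum splitting, and by Lemma \ref{lemma:finding_invariant_subspaces} the number of blocks is controlled by the algebraic multiplicities (in particular the multiplicity of $\lambda = 1$ equals the number $v$ of invariant subspaces). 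Expressing ${\cal K}$ in this basis gives ${\cal K} = \mathrm{diag}({\cal K}_1,\dots,{\cal K}_v)$, as claimed.

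The step I expect to be the main obstacle is justifying that every global eigenfunction can be chosen to live on a single partition rather than spreading across several, equivalently that the eigenbasis can be taken compatible with the orthogonal splitting ${\cal G} = \bigoplus_p {\cal G}_p$. This is exactly where diagonalizability is essential: without algebraic-equals-geometric multiplicity, an eigenvalue shared by two partitions could in principle carry a generalized eigenvector mixing the corresponding ${\cal G}_p$, obstructing the clean block form. I would close this gap using the mutual singularity of the partition supports together with the simplicity of the spectrum on each ergodic component, which forces any eigenfunction to decompose as an orthogonal sum of partition-supported eigenfunctions and hence permits a partition-adapted eigenbasis.
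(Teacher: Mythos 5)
Your proof is correct, but it takes a genuinely different (and more informative) route than the paper's, whose entire proof is the single sentence that the result ``follows from Jordan block decomposition of a matrix'' --- that is, equality of algebraic and geometric multiplicities forces every Jordan block to be $1\times 1$, so ${\cal K}$ is diagonalizable and its eigenvectors can be grouped into blocks. You instead derive the block structure from the dynamics: the subspaces ${\cal G}_p$ of observables supported on the ergodic components ${\cal M}_p$ are ${\cal K}$-invariant, mutually orthogonal, and sum to ${\cal G}$, which already forces block-diagonality in any partition-adapted basis; diagonalizability (Lemma \ref{lemma:am_gm}) and the support/singularity argument are then used to exhibit an eigenbasis compatible with that splitting. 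Your version buys what the bare Jordan citation cannot: an explanation of why each block is a \emph{local} Koopman operator attached to an invariant set, which is the actual content of Proposition \ref{prop:blk_diagoanl_K}. Two points of calibration are worth noting. First, your closing paragraph overstates the role of the multiplicity hypothesis: in your adapted-subspace picture the block form follows from invariance of the ${\cal G}_p$ alone, with no spectral assumption, and an eigenfunction automatically splits into partition-supported eigenfunctions by projecting onto each ${\cal G}_p$; the hypothesis earns its keep only in the regime the paper's one-liner implicitly addresses, namely the finite EDMD matrix expressed in an arbitrary dictionary basis, where the similarity transformation must be assembled from genuine eigenvectors and generalized eigenvectors would obstruct this. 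Second, identifying ${\cal K}|_{{\cal G}_p}$ with the local operator ${\cal K}_p$ for the finite-dimensional matrix tacitly assumes the dictionary span splits along the partition (i.e., contains the localized observables $\chi_p {\bf \Psi}_{p}$), which is exactly the structure imposed in the stitching construction of Section \ref{sec:phase_space_stitching} but is not automatic for a generic dictionary. Neither point invalidates your argument at the paper's level of rigor.
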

\begin{proof}
The result follows from Jordan block decomposition of a
matrix.
\end{proof}

Note that each local Koopman operator corresponds to an invariant subspace, which is an element of the ergodic partition of the state space. 
%---------------------------------------------------------
%---------------------------------------------------------
The next section describes how the evolution of system on the complete phase space (ie., a global Koopman operator) can be identified if the system evolution on invariant subspaces is known (ie., local Koopman operators are known). 
% =================================================================================================
% Phase space stitching
% =================================================================================================
\subsection{Coupling Distinct Phase Representations Assuming Approximate Koopman Operators}
\label{sec:phase_space_stitching}
Let $x\mapsto T(x)$ with $x\in {\cal M}$ and let ${\cal M}_p$, $p=1,\cdots , v$ be the invariant sets of the dynamical system. For any ${\cal M}_p$, let 
\[{\bf \Psi}_{p} = \{\psi_1^p, \cdots , \psi_{n_p}^p\}\]
be the set of dictionary functions used for computation of the local Koopman operator ${\cal K}_p$. Then these local Koopman operators can be combined to form a single Koopman operator which we refer to as the \textit{stitched Koopman operator} and it is given by 
\begin{align*}
     {\cal K}_{\cal S} = \mbox{diag}({\cal K}_{1},{\cal K}_2,\cdots,{\cal K}_{v})
    \end{align*}
 with 
 \begin{align*}
    \pmb{\Psi}(x) = \begin{bmatrix} \chi_1(x) {\bf \Psi}_{1}(x) \\ \chi_2(x) {\bf \Psi}_{2}(x) \\ \vdots \\ \chi_v(x) {\bf \Psi}_{v}(x) \end{bmatrix}
\end{align*}
where $\chi_p(x)$ is the characteristic function corresponding to the (invariant) subspace ${\cal M}_p$ and it is defined as follows for $p=1,\cdots,v$. 
\begin{align*}
    \chi_p(x) = \begin{cases} 1, & \mbox{if}\;\ x \in {\cal M}_p \\ 
    0, & \mbox{otherwise}. \end{cases}
\end{align*}
%---------------------------------------------------------
%---------------------------------------------------------
% 
The order of these local Koopman operators while forming the stitched Koopman operator doesn't matter as long as the local Koopman operators are stacked in accordance with their corresponding observable functions. Note that the intersection of any two invariant sets is a set of measure zero and such sets have not been considered in the above result. The stitched Koopman operator is a block diagonal matrix with ${\cal K}_{\cal S}\in \mathbb{R}^{L\times L}$ where $L = \sum_{p=1}^vn_p$.
One of the key differences between identifying the global Koopman operator as described in Section \ref{sec:identify_inv_spaces} and \ref{sec:phase_space_stitching} is summarized in the remark given below. 
\begin{remark}
In Section \ref{sec:identify_inv_spaces}, the global Koopman operator is identified by starting locally in an invariant subspace and with new time series data whenever \eqref{eq:K_error_ineq} is satisfied, the Koopman operator is updated. This process is repeated until the global Koopman operator is identified. This is however different from the (global) stitched Koopman operator which assumes the knowledge of Koopman operators and dictionary functions in each of the invariant subspaces. Section \ref{sec:phase_space_stitching} results can be readily applied for time-series data generated by multiple experiments. 
% \sai{clarify}
% Suppose if all the invariant subspaces in the phase space are not known, then results from Section \ref{sec:identify_inv_spaces} are applied to first identify all the invariant subspaces and finally construct the stitched Koopman operator which describes the evolution of the system on the global phase space. 
\end{remark}

\section{Simulation Results}
\label{sec:simulation}
The proposed phase space stitching results are illustrated on two different systems. 
We first compute the global Koopman operator ${\cal K}$ assuming the knowledge of time-series data from the entire state space. Next, we assume the knowledge of time-series data only in invariant subspace and compute local Koopman operator respective to each invariant subspace. These local Koopman operators are then used to compute the stitched Koopman operator ${\cal K}_{\cal S}$ applying the results from Section \ref{sec:phase_space_stitching}. We finally establish that the attractor sets are well captured by ${\cal K}_{\cal S}$. We first begin with the bistable toggle switch system. 
\subsection{Bistable Toggle Switch}
Bistable toggle switch is governed by the following equations: 
\begin{equation}
\begin{aligned}
\dot{x}_1 = & \frac{\alpha_1}{1+x_2^{\beta}} - \kappa_1 x_1 \\
\dot{x}_2 = & \frac{\alpha_2}{1+x_1^{\gamma}} - \kappa_2 x_2
\end{aligned}
\label{eq:bistable_toggle_switch}
\end{equation}
where $x_1 \in \mathbb{R}$ and $x_2 \in \mathbb{R}$ represent the concentration of the repressor $1$ and $2$; $\alpha_1$ and $\alpha_2$ denote the effective rate of synthesis of repressor $1$ and $2$; $\kappa_1>0$ and $\kappa_2>0$ are the self decay rates of concentration of repressor $1$ and $2$; $\beta$ and $\gamma$ denote the cooperativity of repression of promoter $2$ and $1$. This bistable toggle switch is a biological system and its mathematical description was first introduced in \cite{gardner2000construction}. This system exhibits bistability, that is, this system has two equilibrium points that are stable and the domain of attraction corresponding to these stable equilibrium points are separated by a separatrix. The phase portrait of this system with two stable equilibrium points (i.e., two stable attractors) can be seen in Fig. \ref{fig:bistable_phase_portrait}. It is important to recall that if the parameters (such as $\alpha_1, \alpha_2$) are different for gene concentrations, $x_1, x_2$, a monostable equilibrium is seen \cite{gyorgy2016quantifying}.  
\begin{figure}
\begin{center}
\includegraphics[width = 0.95 \linewidth]{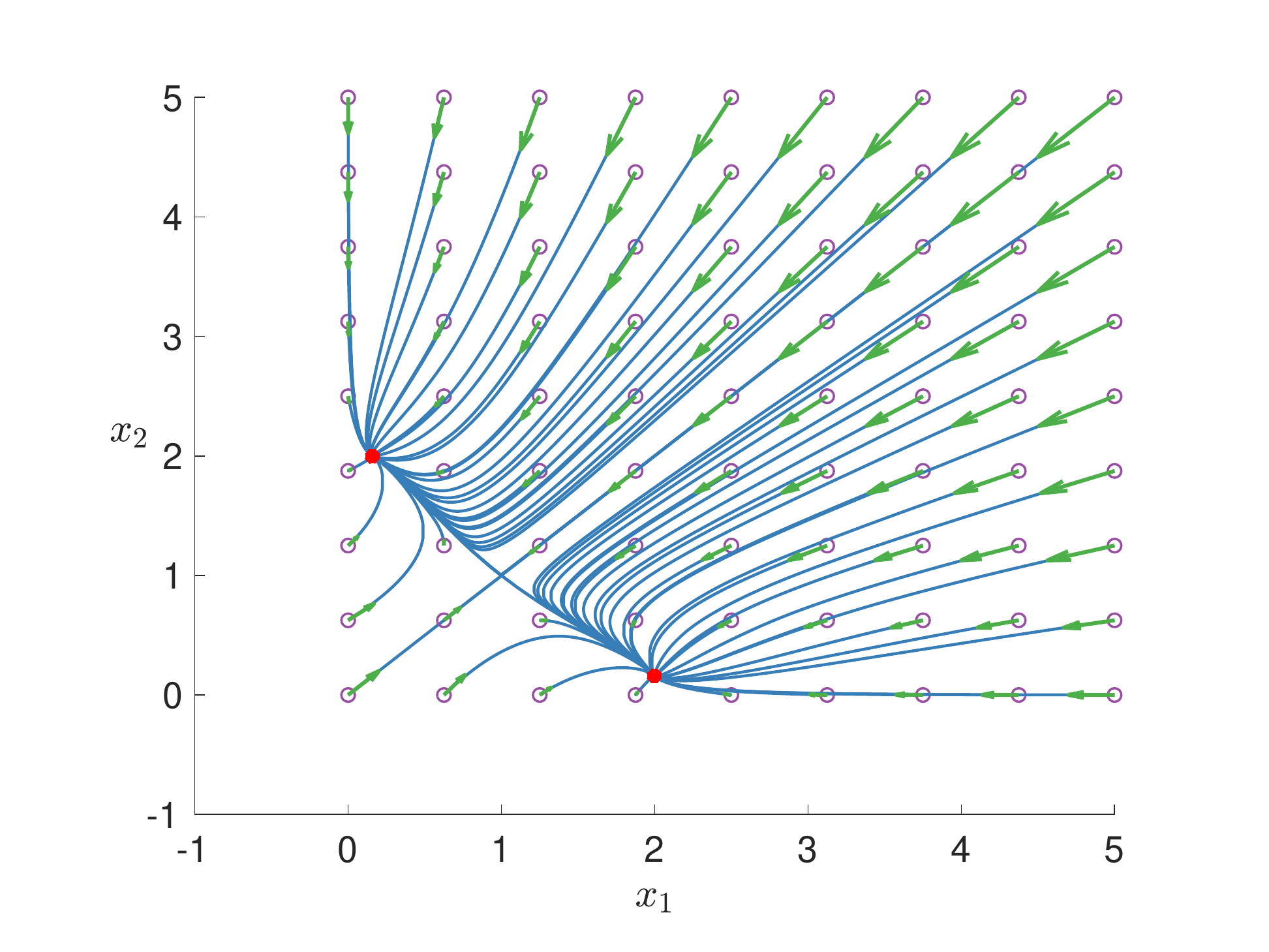}
\caption{Phase portrait of bistable toggle switch. This system has two stable attractors for device parameters: $\beta = 3.55$, $\gamma = 3.53$, $\alpha_1 = \alpha_2 = 1$ and $\kappa_1 = \kappa_2 = 0.5$. Circles indicate the initial conditions, green arrow indicate the direction of the vector field at the initial condition and the red dots are the equilibrium points.}
\label{fig:bistable_phase_portrait}
\end{center}
\end{figure}
%
% \textcolor{red}{Mention the number of initial conditions and number of data points used for global Koopman computation.}
% 
We considered $1000$ time points for $81$ initial conditions in the entire state space (from both the invariant sets) and these data points are lifted to a higher dimensional space using the Gaussian radial basis functions. The associated Koopman operator is computed using the EDMD algorithm by minimizing the residue function $r$ as shown in Eq. \eqref{edmd_op}. The size of the dictionary is chosen to be $30$ and each dictionary function is of the form, $\psi(x) = \exp(-{\parallel x \parallel^2/\sigma^2})$ where $\sigma = 0.4$. The corresponding eigenvalues of the global Koopman operator ${\cal K}$ are shown in Fig. \ref{fig:eigenvalues_bistable}. 
% 
% 
% \begin{figure}[h!]
% \begin{center}
% \subfigure[]{\includegraphics[width = 0.46 \linewidth]{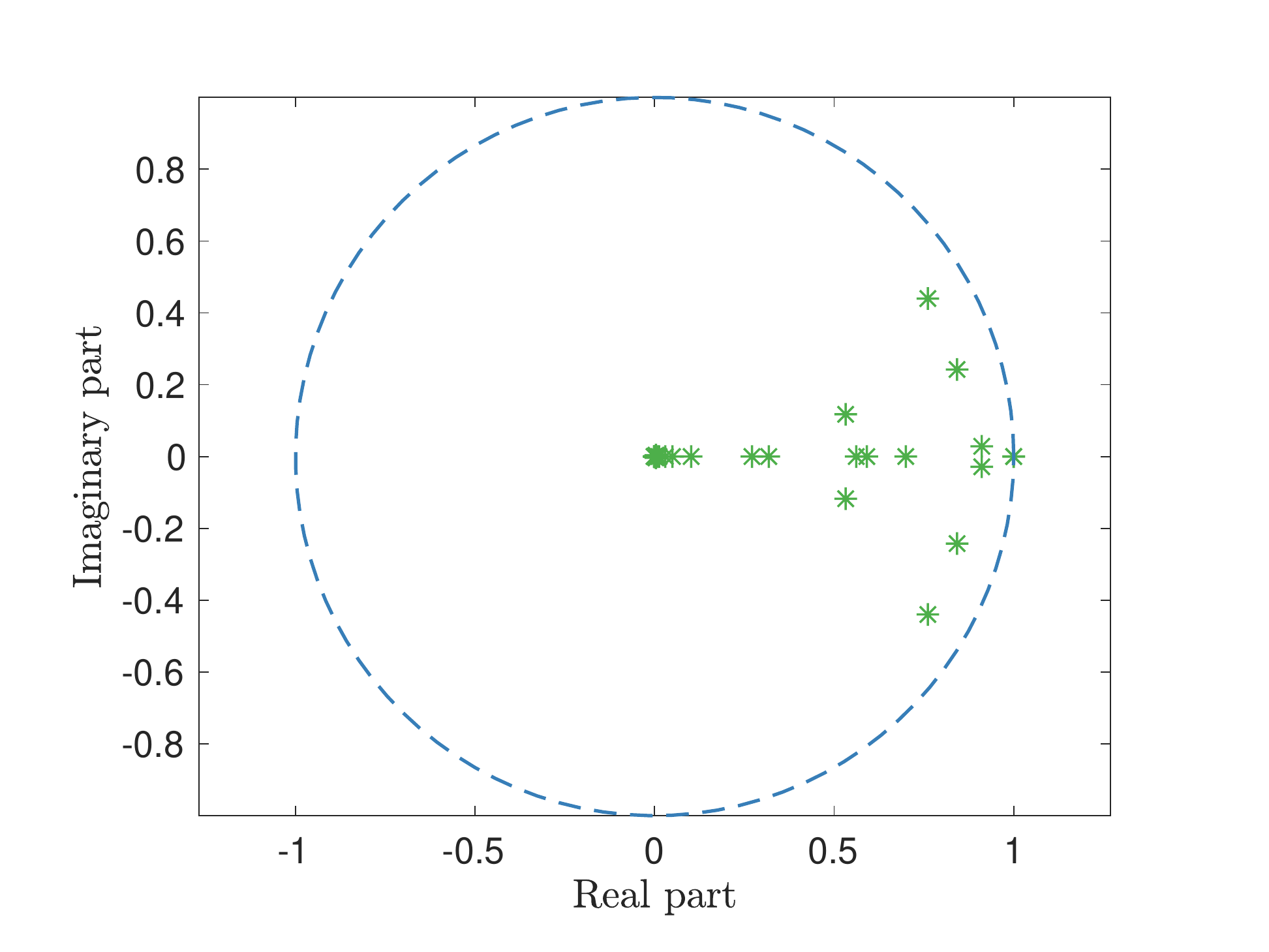}}
% \subfigure[]{\includegraphics[width = 0.46 \linewidth]{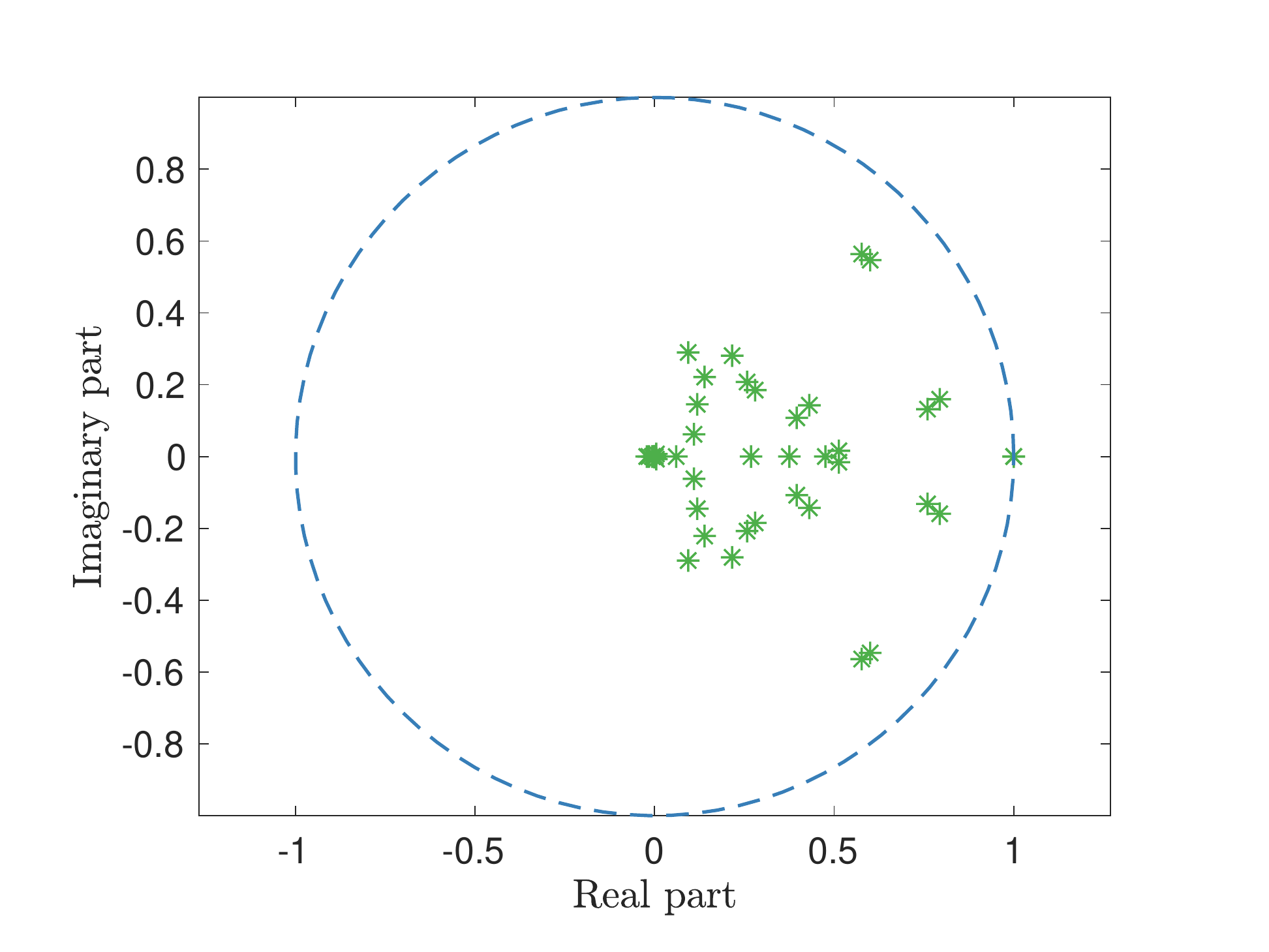}}
% \caption{(a) Eigenvalues of the Koopman operator computed corresponding to the bistable toggle witch system given in Eq. \eqref{eq:bistable_toggle_switch}. (a) Eigenvalues of ${\cal K}$ and (b) Eigenvalues of ${\cal K}_{\cal S}$.}
% \label{fig:eigenvalues_bistable}
% \end{center}
% \end{figure}
\begin{figure}[h!]
\begin{center}
\includegraphics[width = 0.8 \linewidth]{}
\caption{Eigenvalues of the Koopman operator ${\cal K}$ corresponding to the bistable toggle switch system given in Eq. \eqref{eq:bistable_toggle_switch}.}
\label{fig:eigenvalues_bistable}
\end{center}
\end{figure}
%
% \begin{figure}[h!]
% \begin{center}
% \includegraphics[width = 0.95 \linewidth]{eigen_values_bistable_toggle_switch.pdf}
% \caption{Eigenvalues of the Koopman operator corresponding to the bistable toggle switch system.}
% \label{fig:bistable_eigenvalues}
% \end{center}
% \end{figure}
%

The equilibrium points for the bistable toggle switch system are given by $(2,0.16)$ and $(0.16, 2)$. The Koopman operator ${\cal K}$ has two (dominant) eigenvalues, $\lambda = 1$ and these two eigenvalues correspond to the two equilibrium points and it can be seen by plotting the eigenvectors associated with the dominant eigenvalues on the state space as shown in Fig. \ref{fig:bistable_inv_spaces}. It can be seen that, clearly these two eigevectors capture the attractor sets (equilibrium points) in the state space. 
\begin{figure}[h!]
\begin{center}
\subfigure[]{\includegraphics[width = 0.46 \linewidth]{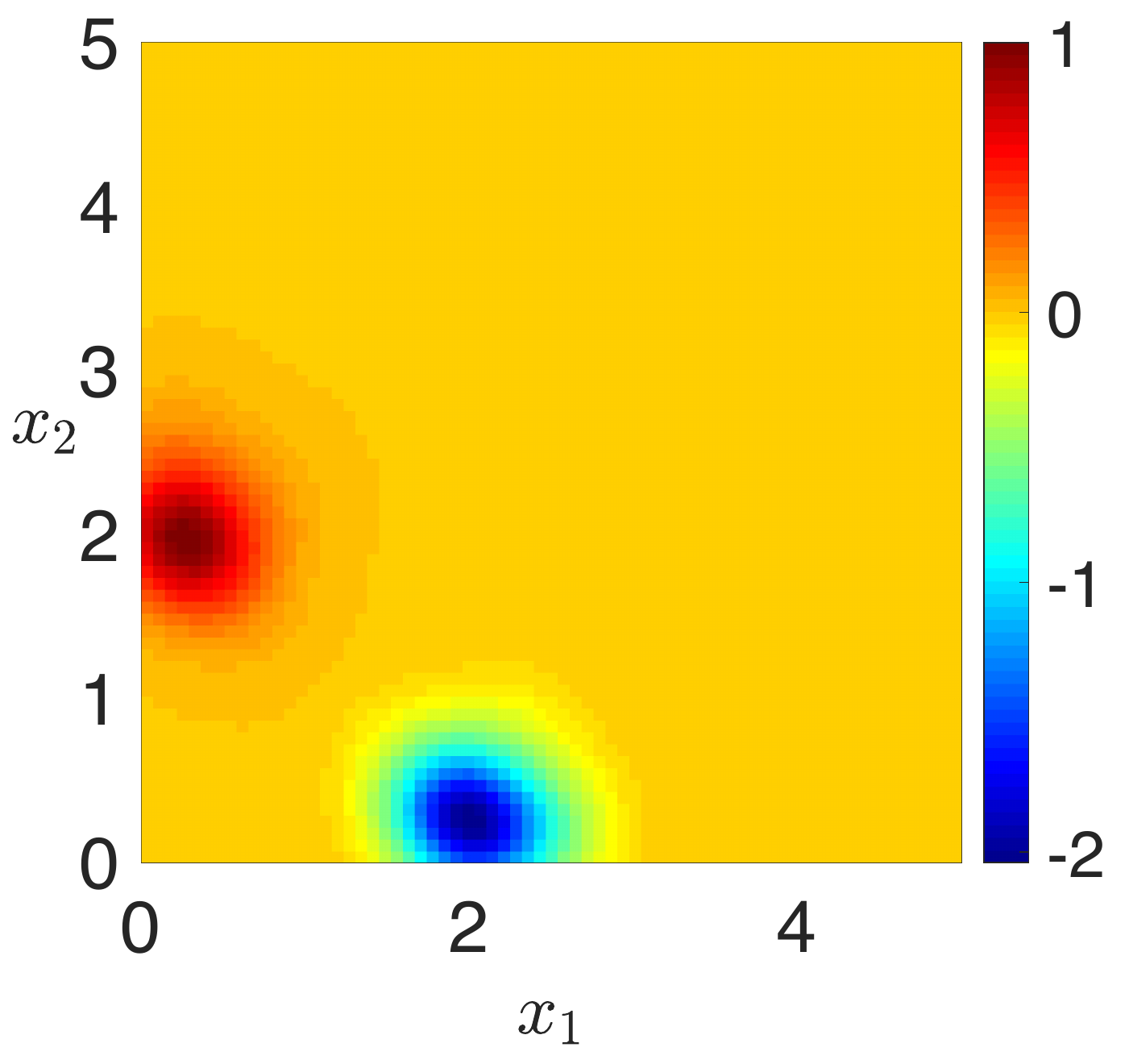}}
\subfigure[]{\includegraphics[width = 0.46 \linewidth]{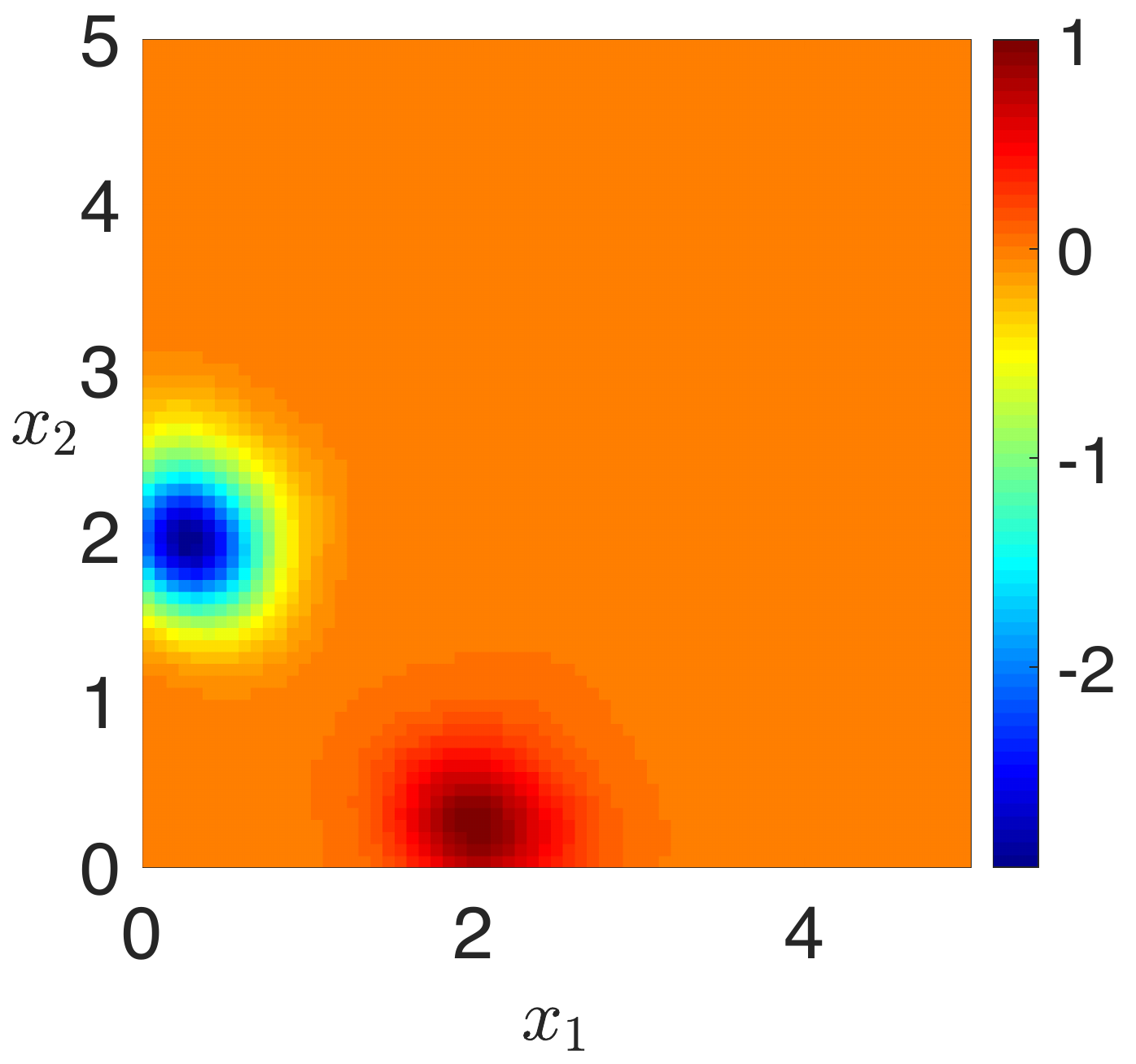}}
\caption{(a) and (b) Eigenvectors of the Koopman operator corresponding to the eigenvalue $\lambda = 1$ on the state space. The region around the equilibrium point can be seen inside the (blue) colored ellipses.}
\label{fig:bistable_inv_spaces}
\end{center}
\end{figure}

Next, assuming the knowledge of the entire phase space is not known, we consider the data on each side of the separatrix at a time and compute the corresponding local Koopman operators. Using these local Koopman operators, we demonstrate the phase space stitching result by computing their equivalent (stitched) global Koopman operator. For the computation of Koopman operator on each attractor, again $30$ Gaussian radial basis functions are used. Let's denote the respective local Koopman operators by ${\cal K}_{l}$ and ${\cal K}_r$. It can be seen that each of these local Koopman operators has one eigenvalue $\lambda = 1$ and their corresponding eigenvectors correspond to the invariant regions in the state space. 
% and they can be seen in Fig. \ref{fig:bistable_inv_spaces_local}. 
% 
% \begin{figure}[h!]
% \begin{center}
% \includegraphics[width = 0.95 \linewidth]{inv_space_bistable_dominant_1_left.pdf}
% \includegraphics[width = 0.95 \linewidth]{inv_space_bistable_dominant_1_right.pdf}
% \caption{(a) Eigenvectors of the Koopman operator ${\cal K}_l$ corresponding to the eigenvalue $\lambda = 1$ on the state space. (b) Eigenvectors of the Koopman operator ${\cal K}_r$ corresponding to the eigenvalue $\lambda = 1$ on the state space.}
% \label{fig:bistable_inv_spaces_local}
% \end{center}
% \end{figure}
% %

As we now have the knowledge of the local invariant subspaces, these spaces can be stitched and the evolution of the system on the complete phase space can be given by a single Koopman operator (as described in Section \ref{sec:phase_space_stitching}), which we are referring to as the stitched Koopman operator, ${\cal K}_{\cal S}:=$diag$({\cal K}_l, {\cal K}_r)$. The eigenvalues of the stitched Koopman operator are shown in Fig. \ref{fig:bistable_eigenvalues_stitched}. It is important to note that the size of ${\cal K}_{\cal S}$ is $60\times 60$ whereas the size of ${\cal K}$ is $30\times 30$. Moreover the sparse structure of both of these Koopman operators can be seen in Fig. \ref{fig:bistable_Koopman_structures}. 
\begin{figure}[h!]
\begin{center}
\includegraphics[width = 0.8 \linewidth]{}
\caption{Eigenvalues of the stitched Koopman operator ${\cal K}_{\cal S}$ corresponding to the bistable toggle switch system given in Eq. \eqref{eq:bistable_toggle_switch}.}
\label{fig:bistable_eigenvalues_stitched}
\end{center}
\end{figure}
\begin{figure}[h!]
\begin{center}
\subfigure[]{\includegraphics[width = 0.48 \linewidth]{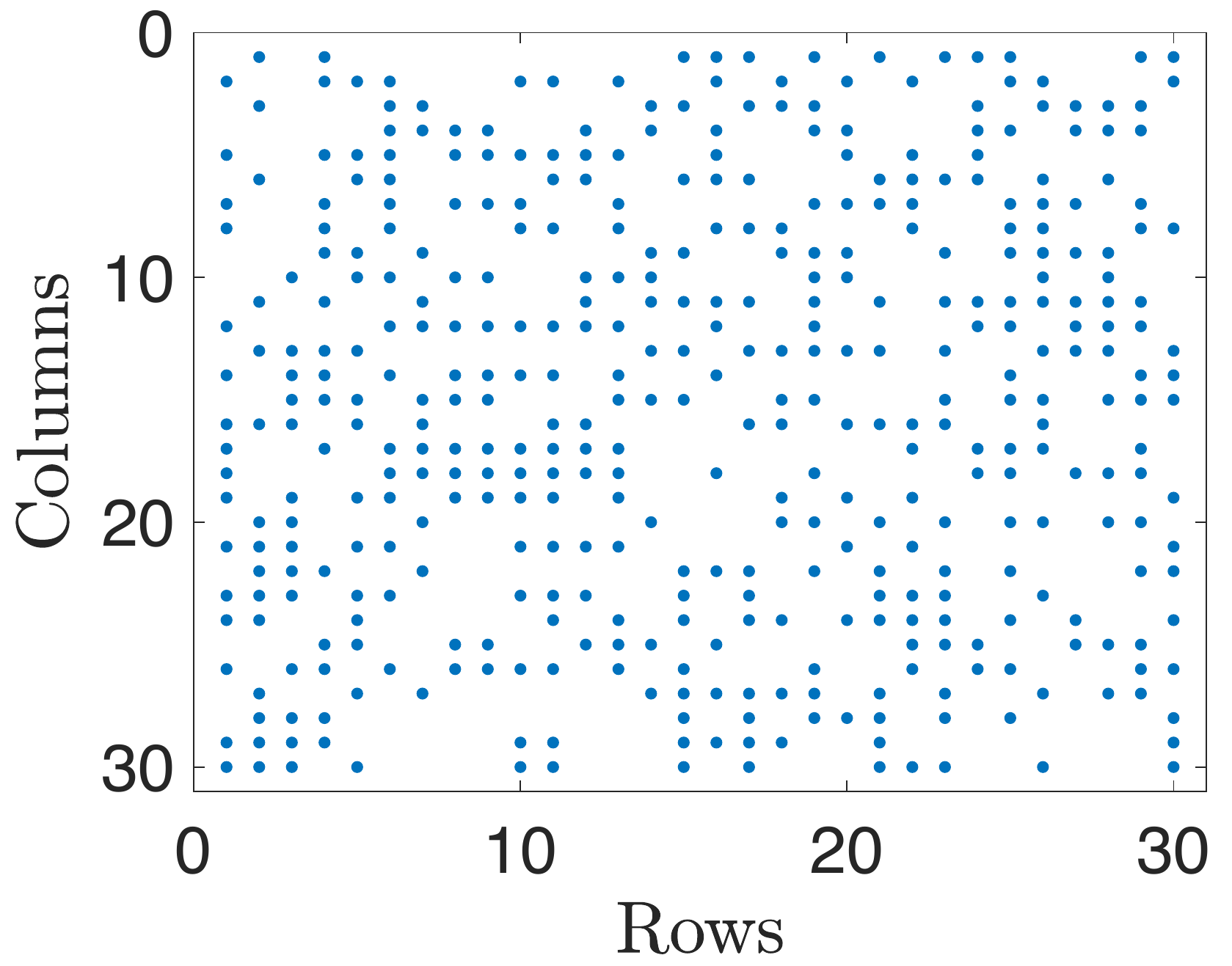}}
\subfigure[]{\includegraphics[width = 0.48 \linewidth]{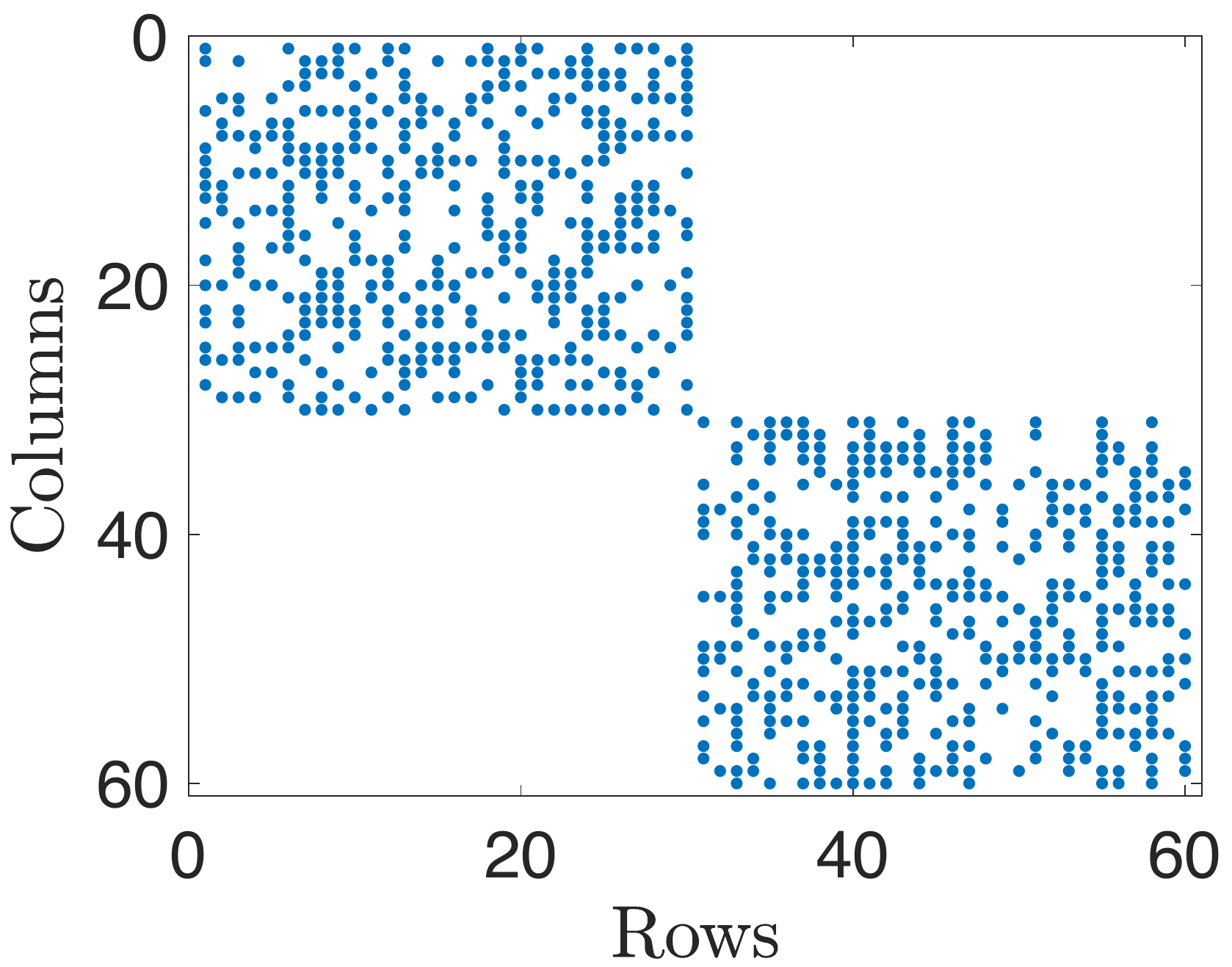}}
\caption{(a) Sparse structure of the Koopman operator ${\cal K}$ (b) Sparse structure of the Koopman operator ${\cal K}_{\cal S}$.}
\label{fig:bistable_Koopman_structures}
\end{center}
\end{figure}

It is important to validate ${\cal K}_{\cal S}$ to ensure that the dominant eigenvalue and eigenfunction pairs are rightly capturing the attractor sets on the state space. To show this, we computed the dominant eigenvalues and their respective eigenfunctions of ${\cal K}_{\cal S}$ which are shown in Fig. \ref{fig:bistable_inv_spaces_stitched}. It can be clearly seen from Fig. \ref{fig:bistable_inv_spaces_stitched} that each of the attractor sets on the state space are captured by the eigenfunctions associated with the first two leading eigenvalues of ${\cal K}_{\cal S}$. 
\begin{figure}[h!]
\begin{center}
\subfigure[]{\includegraphics[width = 0.5 \linewidth]{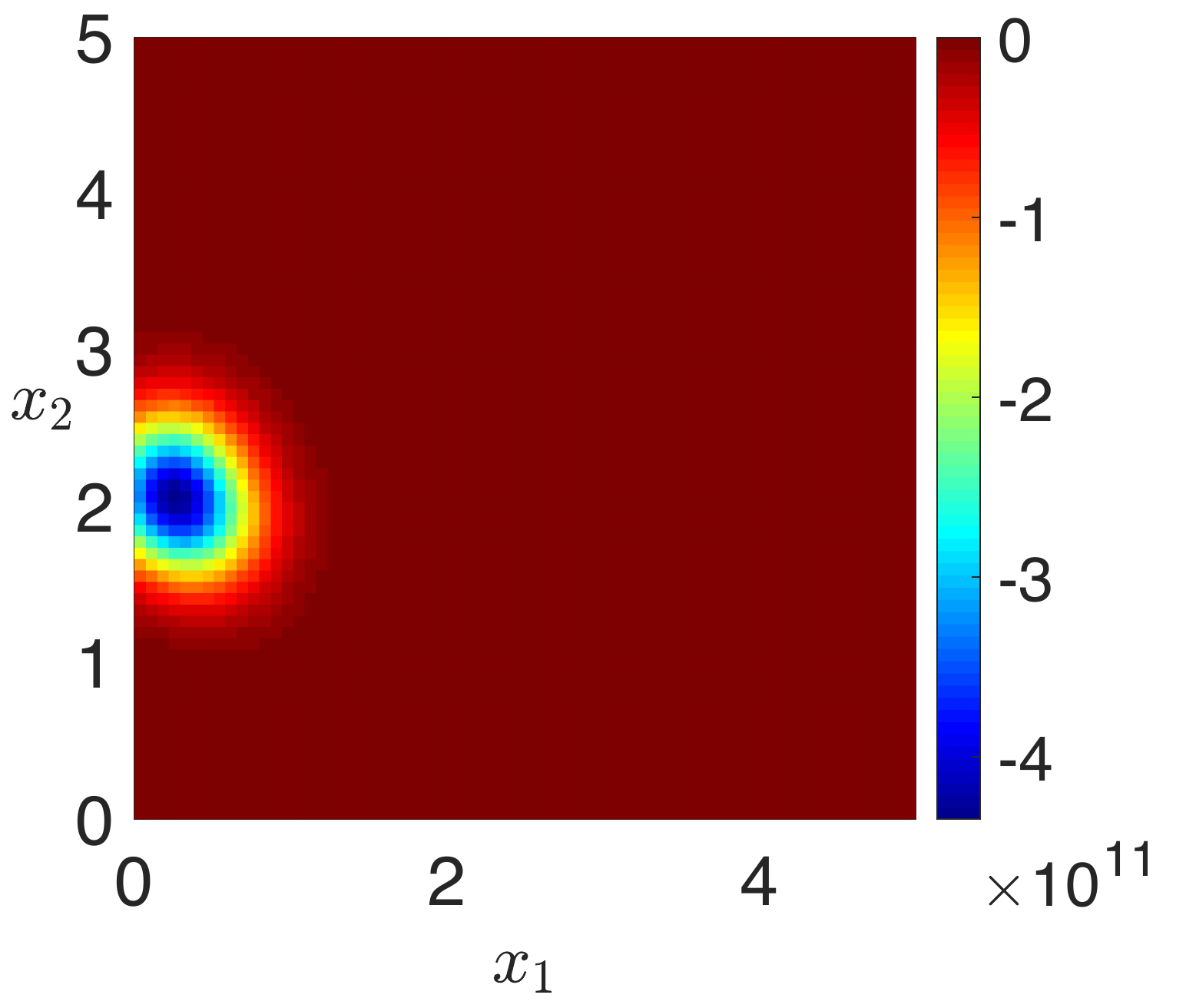}}
\subfigure[]{\includegraphics[width = 0.48 \linewidth]{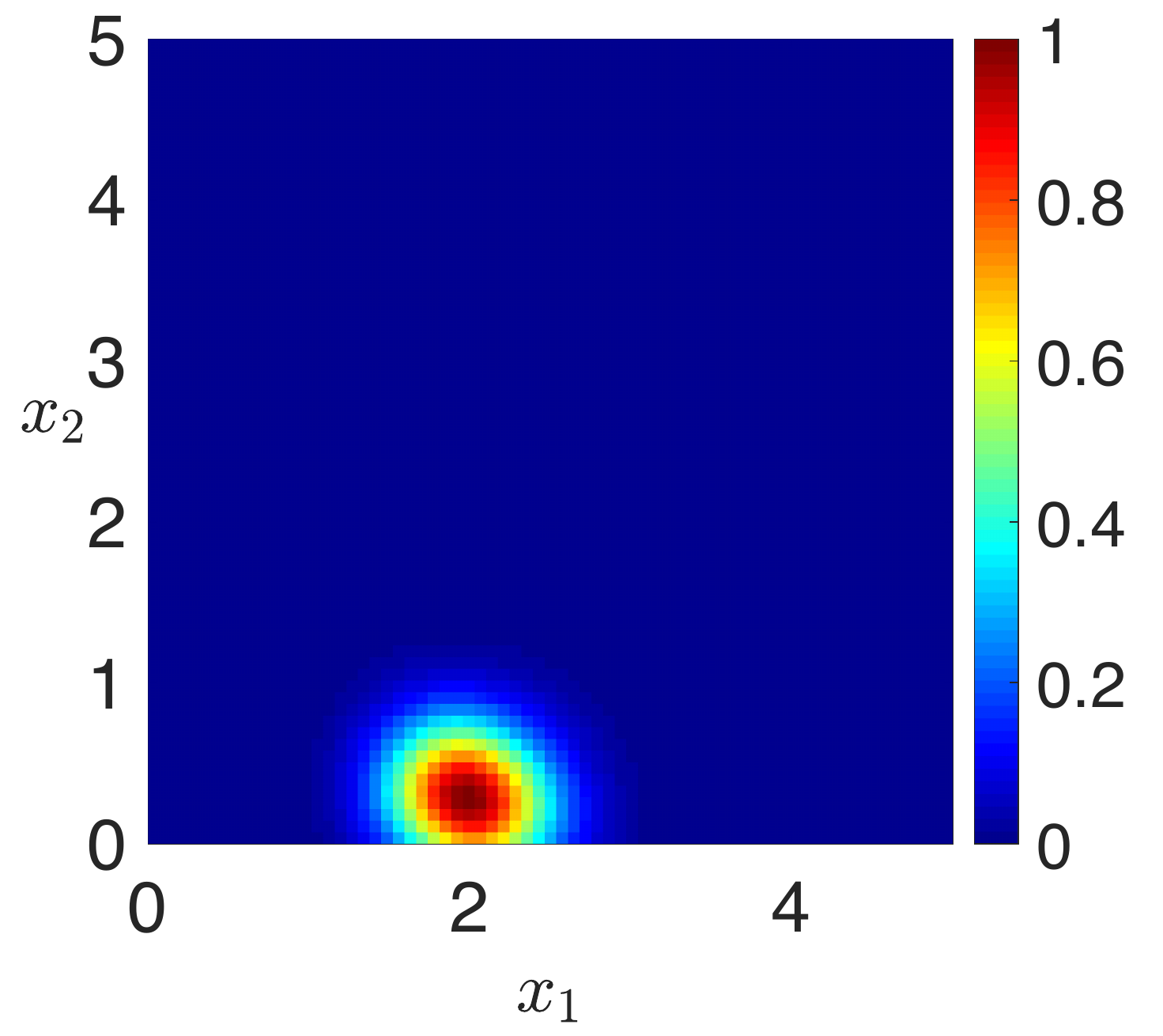}}
\caption{(a) and (b) Eigenvectors of ${\cal K}_{\cal S}$ associated with dominant eigenvalues $\lambda = 1$ on the state space. Eigenvectors of the stitched Koopman operator captures both the invariant sets of the state space.}
\label{fig:bistable_inv_spaces_stitched}
\end{center}
\end{figure}

Observe that, we have now computed two global Koopman operators, one assuming we have knowledge of the entire state space to obtain ${\cal K}$ and the other by stitching the local invariant subspaces to obtain ${\cal K}_{\cal S}$. Fig. \ref{fig:bistable_inv_spaces_stitched} shows the attractor sets on the state space identified using ${\cal K}_{\cal S}$ and moreover they also approximate well the invariant sets shown in Fig. \ref{fig:bistable_inv_spaces}. This validates the proposed approach of phase space stitching to compute the global Koopman operator from local Koopman operators. 
%---------------------------------------------------------------------
%---------------------------------------------------------------------
% 
Next, we demonstrate the phase space stitching result on the following example. 
\subsection{Second-order System Example}
We consider a second-order dynamical system which is governed by the  dynamics:
\begin{equation}
\begin{aligned}
\dot{x}_1 = & x_1 - x_1 x_2 \\
\dot{x}_2 = & x_1^2 -2x_2
\end{aligned}
\label{eq:heart_dyn_system}
\end{equation}
The system \eqref{eq:heart_dyn_system} has 3 equilibrium points at $(\sqrt{2},1)$, $(-\sqrt{2},1)$ and $(0,0)$. It is seen that the origin is a saddle point and the other two equilibrium points are stable. The phase portrait of this system is shown in Fig. \ref{fig:phase_portrait_heart} and this system also has bistability similar to the bistable toggle switch system. 
\begin{figure}[h!]
\begin{center}
\includegraphics[width = 0.9 \linewidth]{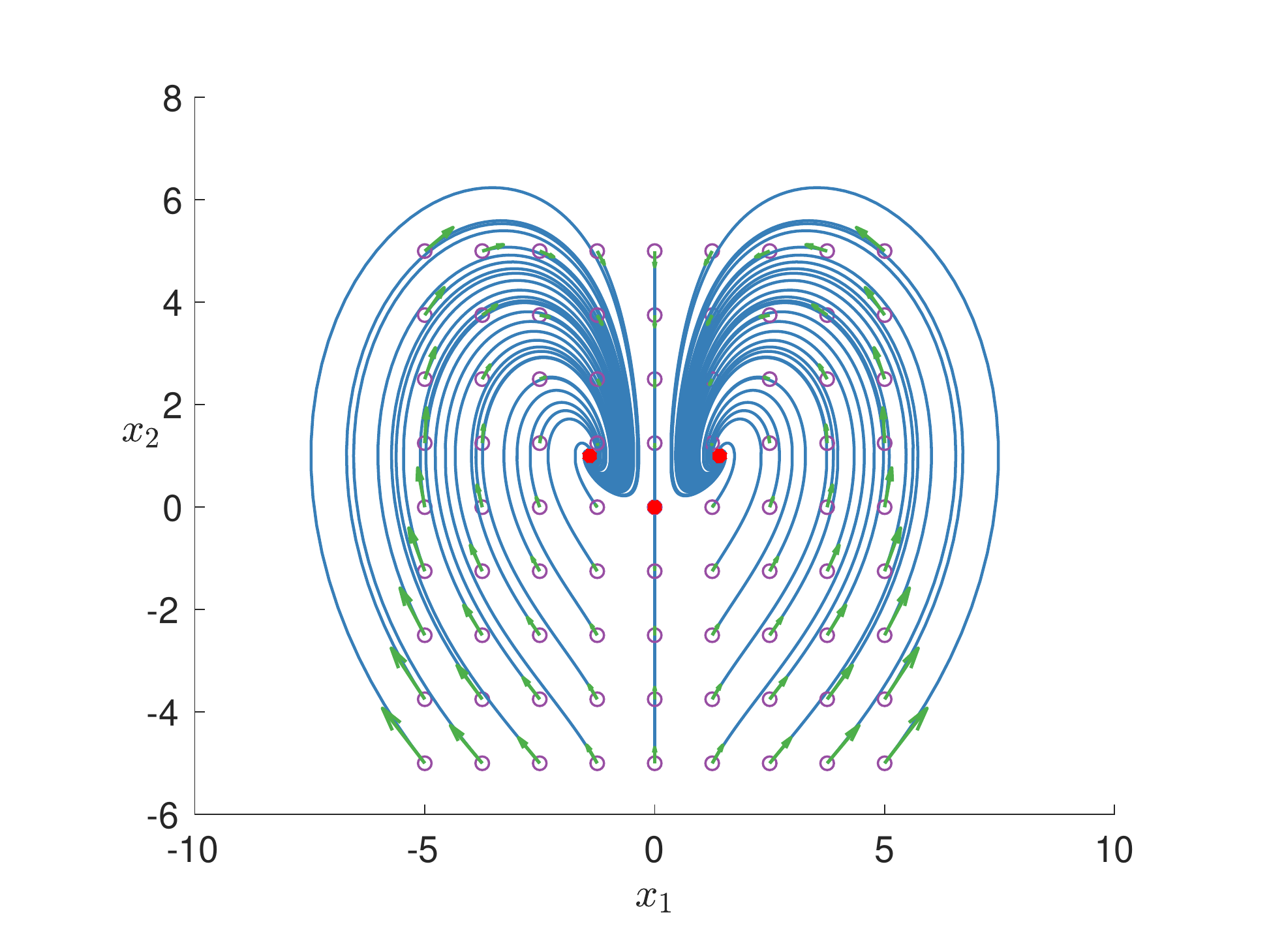}
\caption{Phase portrait corresponding to the system \eqref{eq:heart_dyn_system}. Circles indicate the initial conditions, green arrow indicate the direction of the vector field at the initial condition and the red dots are the equilibrium points.}
\label{fig:phase_portrait_heart}
\end{center}
\end{figure}

The Koopman operator trained on the complete state space data (which has $1000$ time points corresponding to $81$ initial conditions) is denoted by ${\cal K}$ where $30$ Gaussian radial basis functions with $\sigma = 0.4$ are used. The observable functions used for the bistable toggle switch are used for this system as well. Eigenvalues of the Koopman operator ${\cal K}$ can be seen in Fig. \ref{fig:eigenvalues_heart} and the eigenvectors corresponding to the dominant eigenvalues of ${\cal K}$ are shown in Fig. \ref{fig:inv_spaces_heart}. 
% 
% \begin{figure}[h!]
% \begin{center}
% \subfigure[]{\includegraphics[width = 0.46 \linewidth]{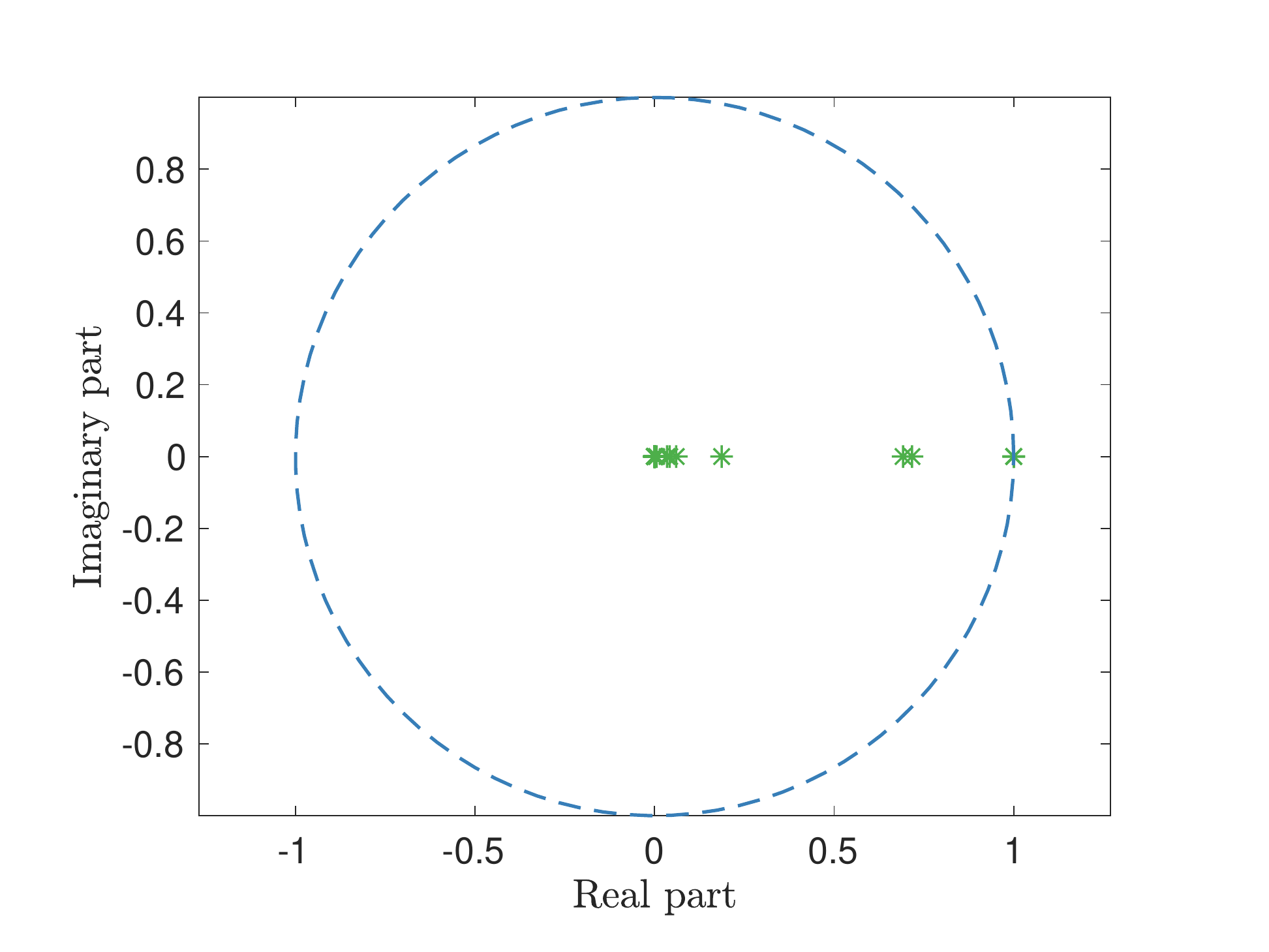}}
% \subfigure[]{\includegraphics[width = 0.46 \linewidth]{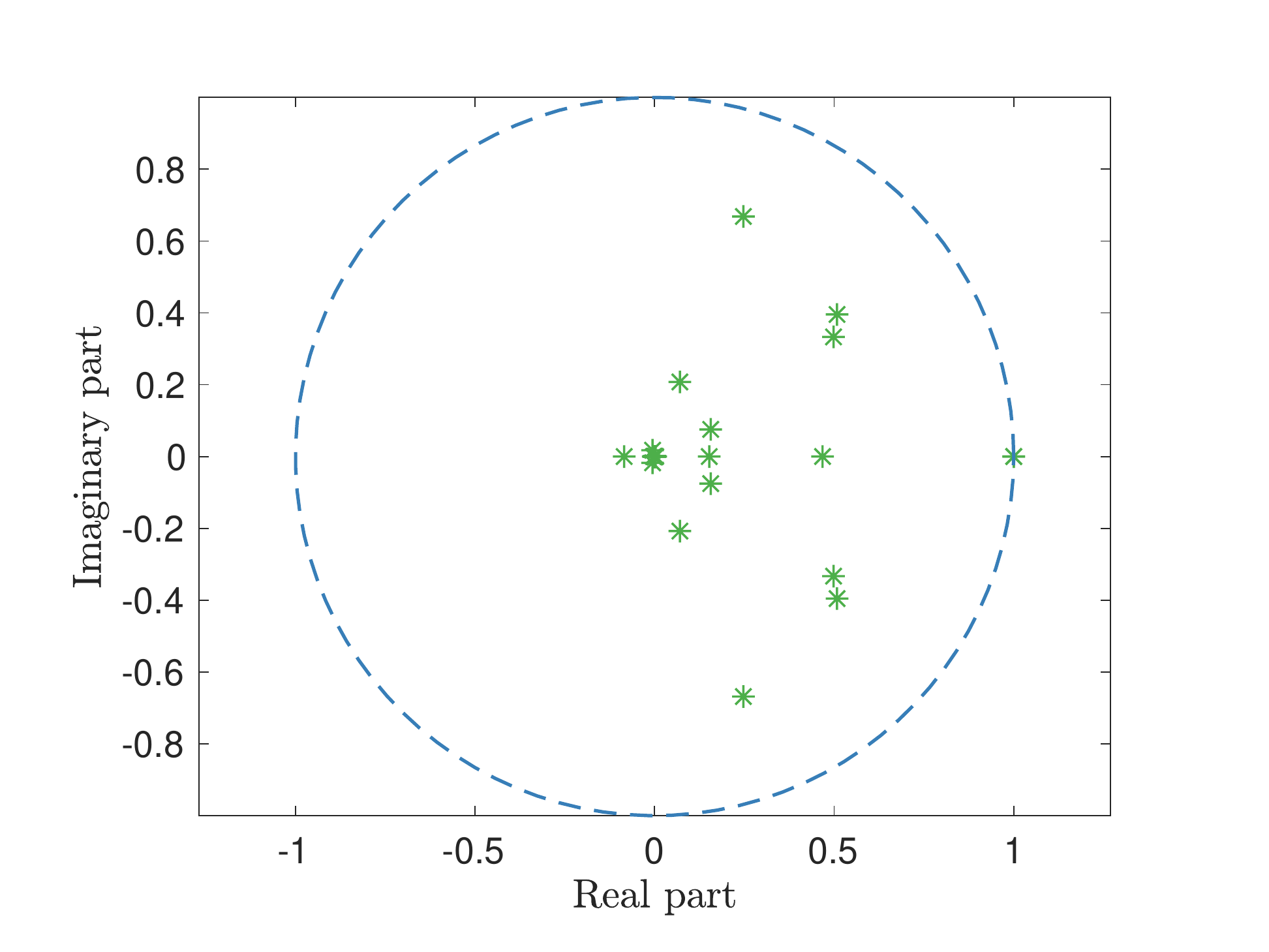}}
% \caption{(a) Eigenvalues of the Koopman operator computed corresponding to the system given in Eq. \eqref{eq:heart_dyn_system}. (a) Eigenvalues of ${\cal K}$ and (b) Eigenvalues of ${\cal K}_{\cal S}$.}
% \label{fig:eigenvalues_heart}
% \end{center}
% \end{figure}
%
\begin{figure}[h!]
\begin{center}
\includegraphics[width = 0.8 \linewidth]{}
\caption{Eigenvalues of the stitched Koopman operator, ${\cal K}$ corresponding to the system given in Eq. \eqref{eq:heart_dyn_system}.}
\label{fig:eigenvalues_heart}
\end{center}
\end{figure}

\begin{figure}[h!]
    \centering
    \subfigure[]{\includegraphics[width = 0.48 \linewidth]{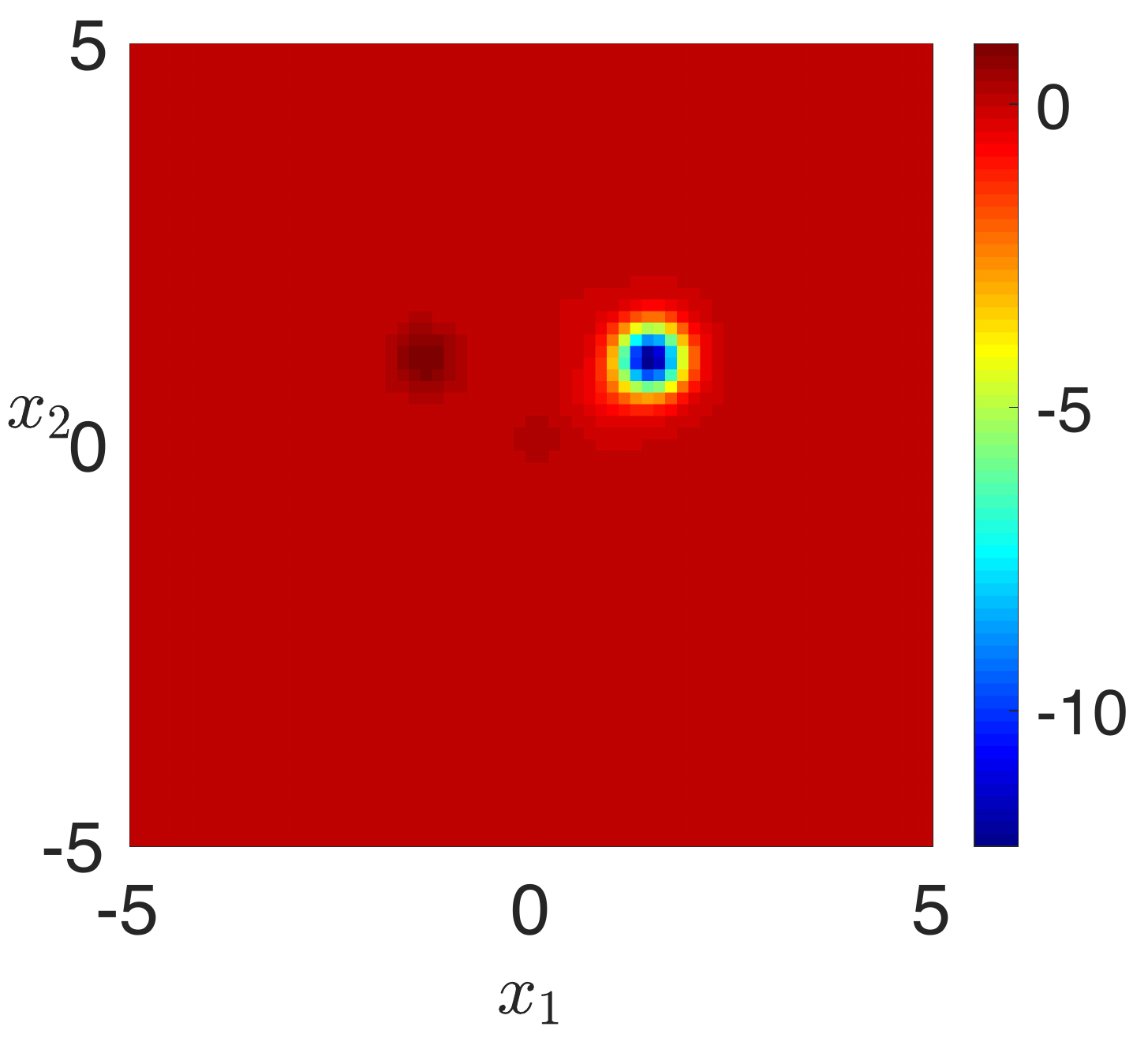}}
    \subfigure[]{\includegraphics[width = 0.5 \linewidth]{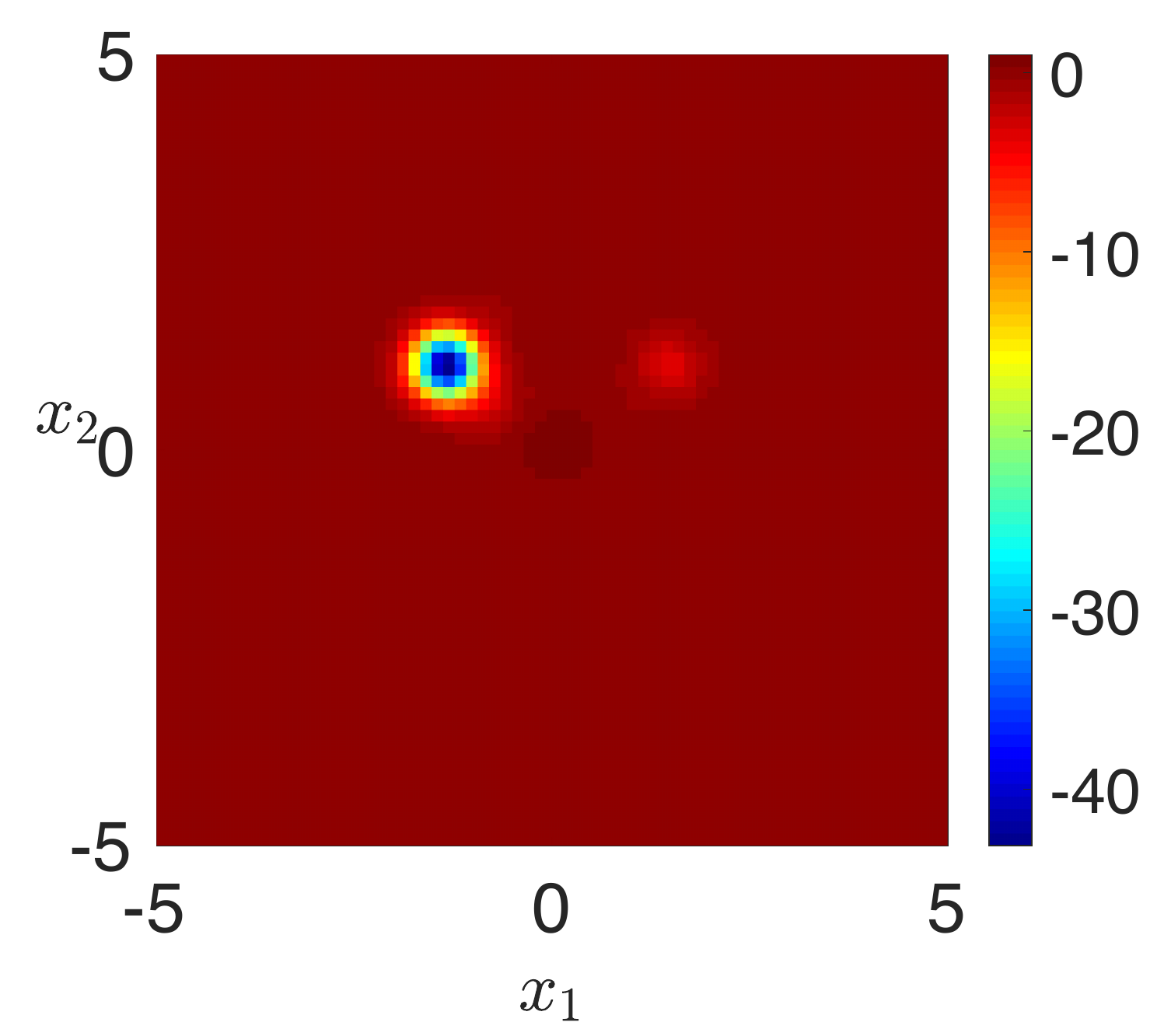}}
    \subfigure[]{\includegraphics[width = 0.5 \linewidth]{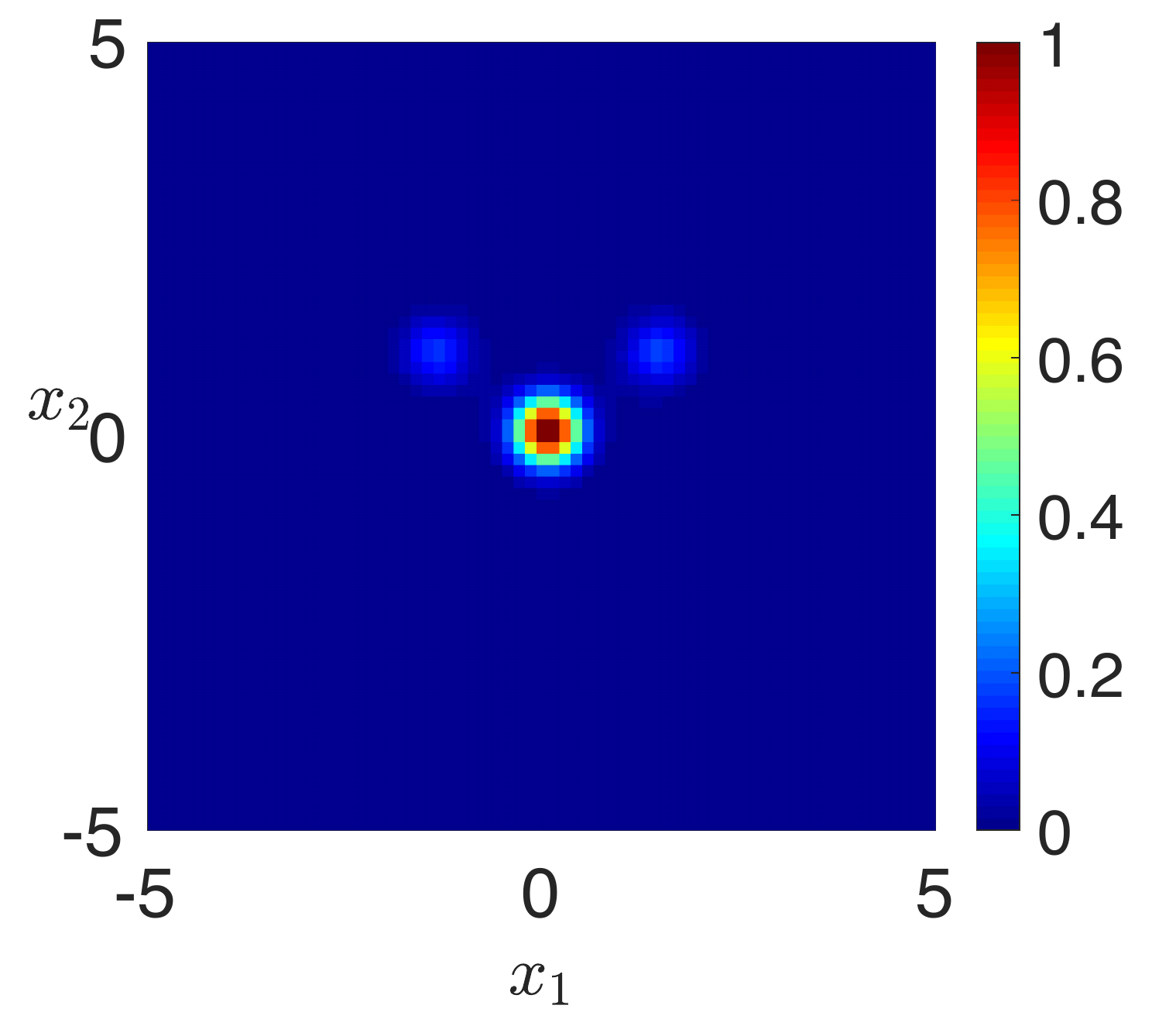}}
    \caption{Eigenvectors corresponding to the dominant eigenvalues of ${\cal K}$ on state space. (a) and (b) stable equilibrium points and (c) saddle point. }
    \label{fig:inv_spaces_heart}
\end{figure}

Next, assume that no knowledge of the complete phase space is known and the knowledge about the local invariant sets are only known. Then, the corresponding local Koopman operators are denoted by ${\cal K}_l$ and ${\cal K}_r$. Furthermore, the stitched Koopman operator with respect to these local Koopman operators is given by ${\cal K}_{\cal S}$. 
The eigenvalues and the sparse structure of the stitched Koopman operator when compared to ${\cal K}$ are shown in Fig. \ref{fig:eigenvalues_heart_stitched} and Fig. \ref{fig:heart_Koopman_structures} respectively. 
\begin{figure}[h!]
\begin{center}
\includegraphics[width = 0.8 \linewidth]{}
\caption{Eigenvalues of the stitched Koopman operator ${\cal K}_{\cal S}$ corresponding to the system \eqref{eq:heart_dyn_system}.}
\label{fig:eigenvalues_heart_stitched}
\end{center}
\end{figure}
\begin{figure}
\begin{center}
\subfigure[]{\includegraphics[width = 0.48 \linewidth]{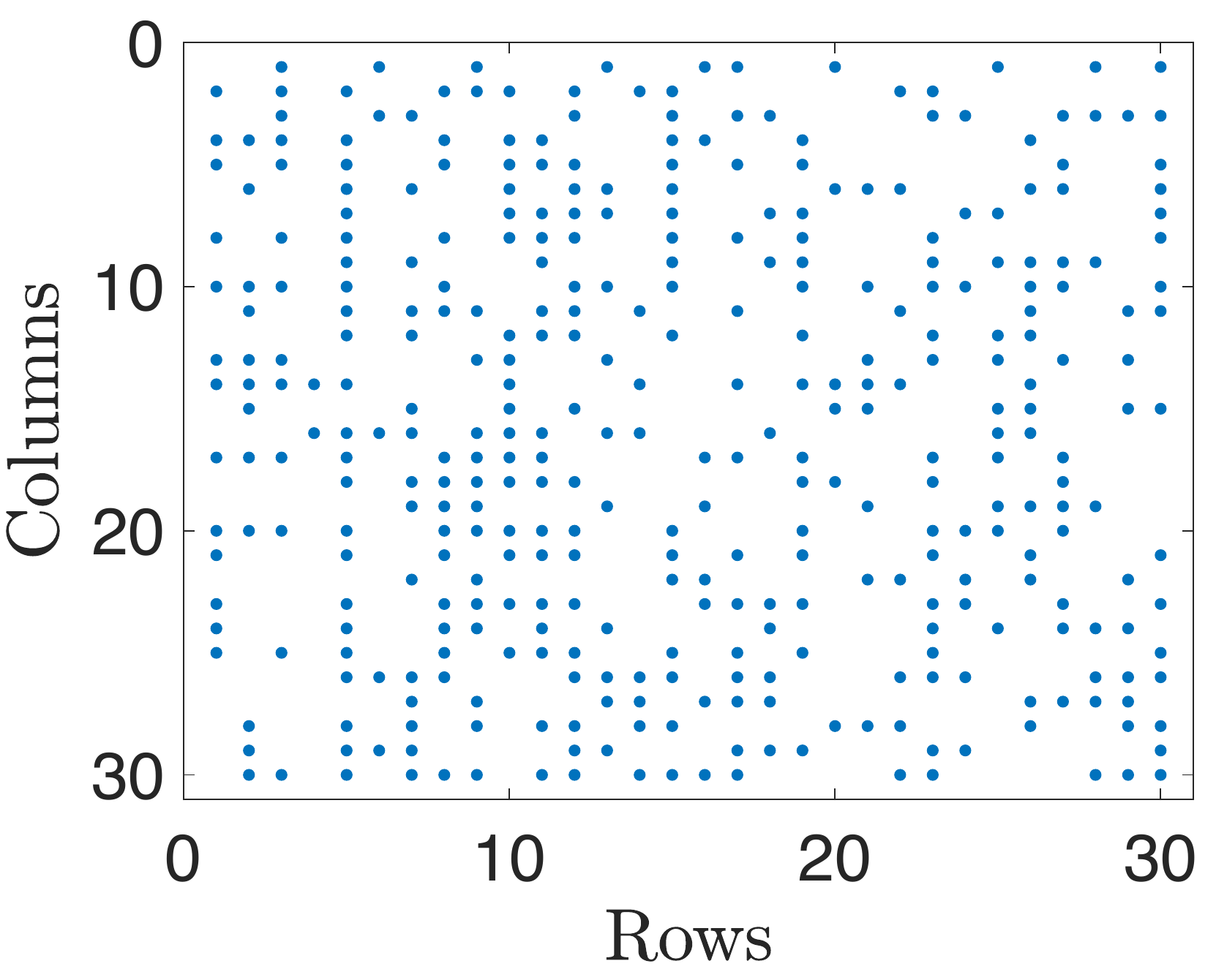}}
\subfigure[]{\includegraphics[width = 0.48 \linewidth]{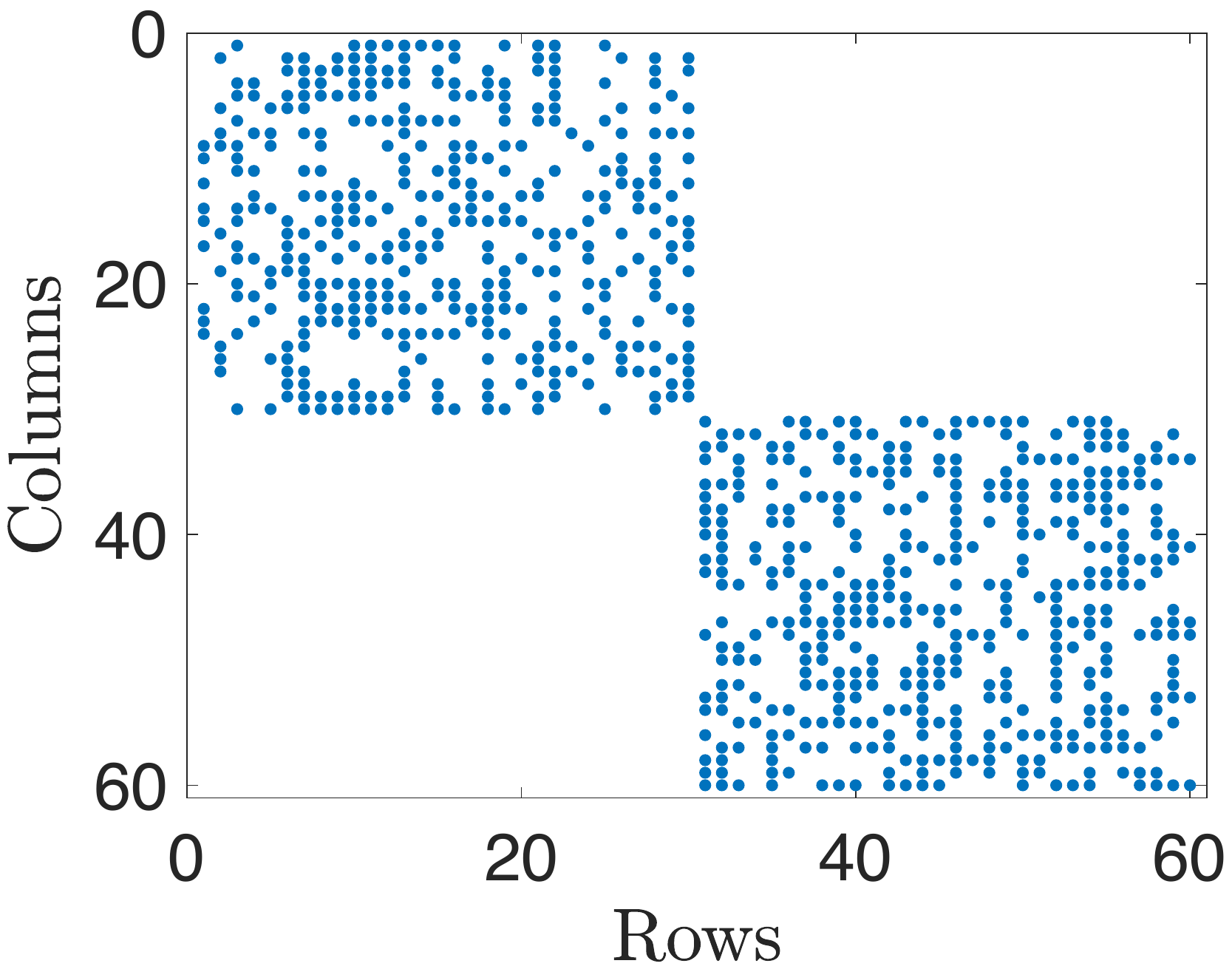}}
\caption{(a) Sparse structure of the Koopman operator ${\cal K}$ (b) Sparse structure of the Koopman operator ${\cal K}_{\cal S}$.}
\label{fig:heart_Koopman_structures}
\end{center}
\end{figure}

The eigenvector plots corresponding to the dominant eigenvalues of ${\cal K}_{\cal S}$ are shown in Fig. \ref{fig:heart_inv_spaces_stitched} (a) and (b). It can be seen that the stitched Koopman operator identifies the attractor sets in the state space. 
\begin{figure}[h!]
\begin{center}
\subfigure[]{\includegraphics[width = 0.50 \linewidth]{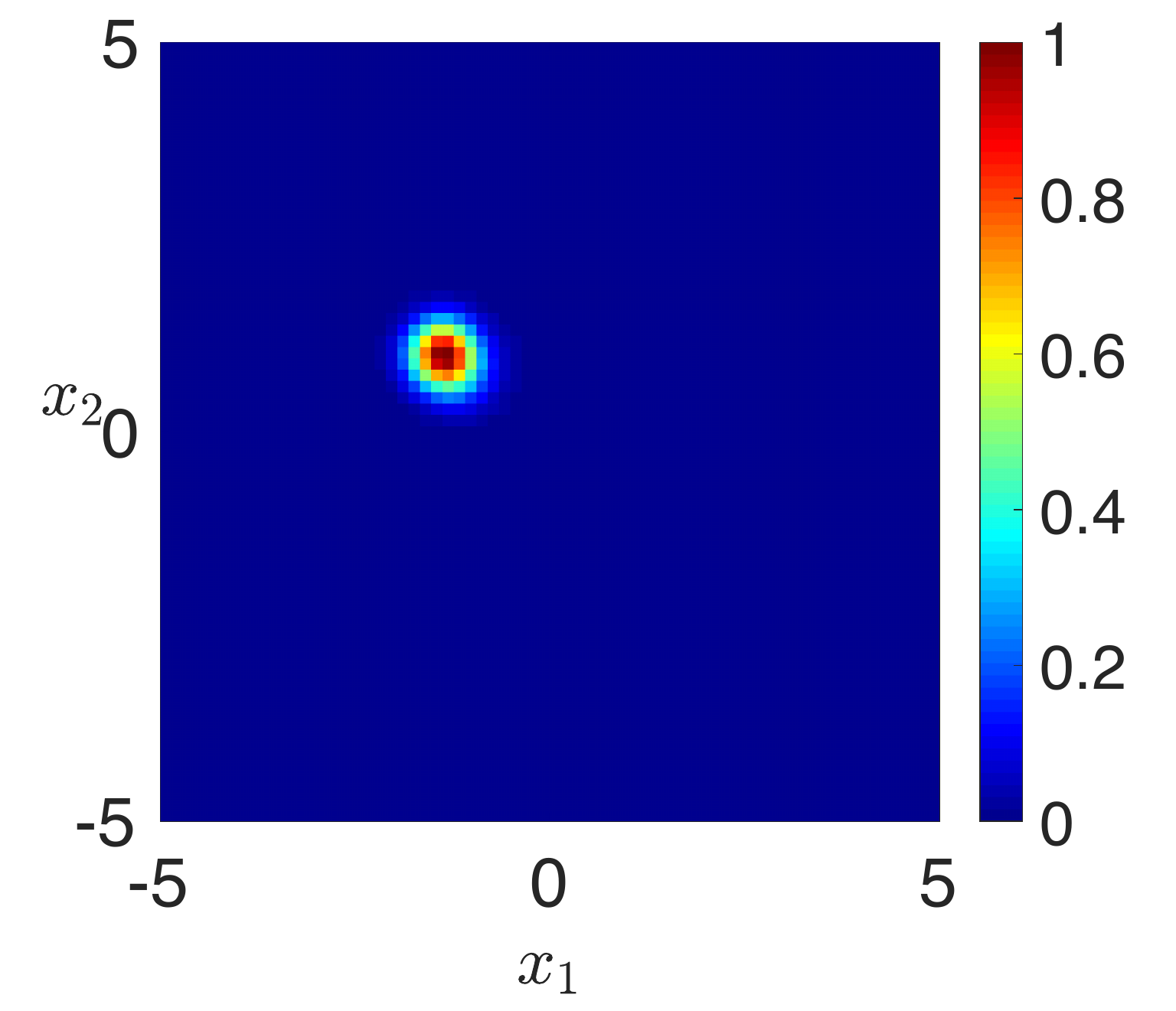}}
\subfigure[]{\includegraphics[width = 0.47 \linewidth]{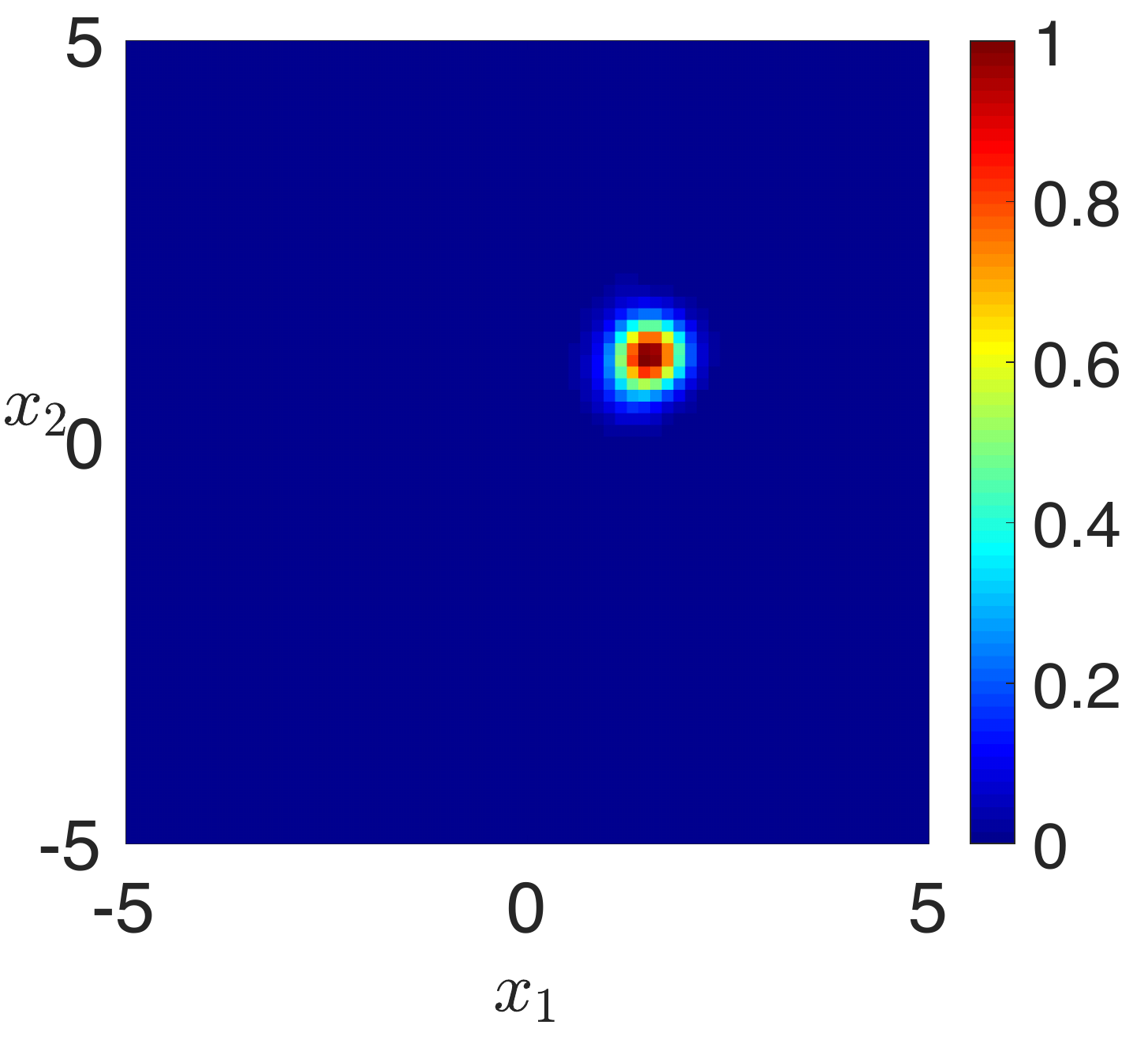}}
\caption{(a) and (b) Eigenvectors of ${\cal K}_{\cal S}$ associated with dominant eigenvalues $\lambda = 1$ on the state space. Eigenvectors of the stitched Koopman operator captures both the invariant sets of the state space.}
\label{fig:heart_inv_spaces_stitched}
\end{center}
\end{figure}
Therefore, the stitched Koopman operator describes the global behavior of the system and identifies the attractor sets of the phase space in comparison to ${\cal K}$ (which me may not have access to in real experiments. 

\section{Conclusion}
\label{sec:conclusion}
This works deals with the global phase space exploration of a dynamical system using data-driven operator theoretic approaches. Using the spectral properties of the Koopman operator, conditions are presented to identify multiple invariant subspaces of a dynamical system given the time-series data of a dynamical system. Furthermore, a phase space stitching result is developed which results in the global evolution of the system by considering the behaviors of the system locally. Finally the developed results are illustrated on two systems: a bistable toggle switch and a second-order example. For the validation purpose of the proposed framework, the invariant subspaces identified by the phase space stitched Koopman operator are compared against the global Koopman operator. This work has the potential to experimentally discover quiescent phenotypes of a biological system which forms the future efforts of the scope of this work.

\section{Acknowledgments}
The authors would also like to thank Igor Mezic, Nathan Kutz, Erik Bollt, Robert Egbert, and Umesh Vaidya for stimulating conversations.  Any opinions, findings and conclusions or recommendations expressed in this material are those of the author(s) and do not necessarily reflect the views of the Defense Advanced Research Projects Agency (DARPA), the Department of Defense, or the United States Government. This work was supported partially by a Defense Advanced Research Projects Agency (DARPA) Grant No. DEAC0576RL01830 and an Institute of Collaborative Biotechnologies Grant.

% =================================================================================================
% References 
% =================================================================================================
\bibliographystyle{IEEEtran}
\bibliography{Refs}
% =================================================================================================
\end{document}